\newdimen\headwidth
\newdimen\headrulewidth\label{means}
\theoremstyle{plain}
\newtheorem{proposition}{Proposition}
\newtheorem{lemma}[proposition]{Lemma}
\newtheorem{theorem}[proposition]{Theorem}
\newtheorem{corollary}[proposition]{Corollary}
\theoremstyle{definition}
\theoremstyle{remark}
\numberwithin{proposition}{section}
\numberwithin{equation}{section}
\newcommand{\bM}{\mathbb {M}}
\newcommand{\bR}{\mathbb {R}}
\newcommand{\sD}{\mathcal{D}}
\newcommand{\sF}{\mathcal{F}}
\newcommand{\sM}{\mathcal{M}}
\newcommand{\sK}{\mathcal{K}}
\newcommand{\ot}{\overline {t}}
\newcommand{\uu}{\underline {u}}
\title{Regularity of Minimizers for a General Class of Constrained Energies in Two-Dimensional Domains with Applications to Liquid Crystals}
\author{Patricia Bauman, \ Daniel Phillips\\
Department of Mathematics\\
Purdue University\\
West Lafayette, IN 47907, USA}
\date{}
\begin{document}
\maketitle
\begin{abstract}
We investigate minimizers defined on a bounded domain $\Omega$ in $\bR^2$ for singular constrained energy functionals that include Ball and Majumdar's modification of the Landau-de Gennes Q-tensor model for nematic liquid crystals. We prove regularity of minimizers with finite energy and show that their range on compact subdomains of $\Omega$ does not intersect
the boundary of the constraining set. We apply this result to prove that minimizers of the constrained Landau-de Gennes Q-tensor energy for liquid crystals composed of a singular Maier-Saupe bulk term and all elasticity terms with coefficients $L_1, \cdots , L_5$, are $C^2$ in $\Omega$; and their eigenvalues on compact subsets of $\Omega$ are contained in closed subintervals of
the physical range $(-\frac {1}{3},\frac {2}{3})$.
\end{abstract}

\section{Introduction.}\label{s1}
In this paper we consider minimizers to a singular constrained energy functional of the form
\begin{equation}\label{Energy}
J[v]=\int\limits_\Omega(F(v,Dv)+f(v))dx
\end{equation}
where $\Omega$ is a bounded $C^2$ domain in $\bR^n$ and $n\geq 2$.  We assume that $f$ is defined and real-valued on an open, bounded, convex set $\sK$ in $\bR^q$ with $q \geq 1$ and that $f(v) \to \infty$ as $v \to \partial \sK$ for $v$ in $\sK$. We extend the definition of $f$ to all of $\bR^q$ by setting $f(v)= \infty$ for $v$ in $\bR^q\setminus \sK$.
Thus we assume throughout this paper that
{\begin{equation}\begin{cases}\label{Po-St}
f:\sK\to\bR,\quad f\in C^2(\sK),\quad D^2f\ge -MI_q \text{ on  } \sK,\\
\underset{v\to\partial\sK  \atop v\in\sK}\lim f(v)=\infty,
\text{   and   } f(v) = \infty \text{ on } \bR^q\setminus \sK
\end{cases}
\end{equation}
where $M \geq 0.$  We also assume the following structure conditions on $F$:
\begin{equation}\begin{cases} \label{El-St}
F(v,P)=A^{\alpha\beta}_{ij}(v)p^i_\alpha p^j_\beta +B^\alpha_i(v)p^i_\alpha\quad \mathrm{for}\,\, v\in\overline{\sK}, P\in\bM^{q\times n},\\
\\
 A^{\alpha\beta}_{ij}(v)p^i_\alpha p^j_\beta \ge \lambda |P|^2 \,\,\, \text{  for}\,\, v\in \overline{\sK}, P \in\bM^{q\times n},\\
\\
\text{where}\,\, A^{\alpha\beta}_{ij}, B^\alpha_i\in  C^{2}(\overline{\sK})\,\,\text{ and}\,\, \lambda>0.
\end{cases}\end{equation}
 We use the convention in this paper  that repeated indicies are summed. In this case $i$  and  $j$ go from  $1$  to  $n$  and
 $\alpha$ and $\beta$ go from  $1$  to  $q$. Here $\bM^{q\times n}$ denotes the set of $q\times n$ real-valued matrices.}  We define
$$M_1=max[\{sup_{v \in \overline \sK}|A_{ij}^{\alpha \beta}(v)|\},\{sup_{v \in \overline \sK}|B_{i}^{\alpha}(v)|\}]  $$
and $M_2\equiv max \{\|A_{ij}^{\alpha \beta}\|_{C^2(\overline \sK)},
\|B_{i}^{\alpha}\|_{C^2(\overline \sK)}\}.$

The energy functional $J$ is defined for all $v$ in
\[
H^1(\Omega;\overline{\sK})=\{v\in H^1(\Omega;\bR^q): v(x)\in \overline{\sK}\,\, \mathrm{ almost\  everywhere\  in }\,\, \Omega\}.\]
By assumption \eqref{Po-St} on $f$, if $u \in H^1(\Omega;\overline{\sK})$ and $J[u] < \infty$, then $u(x)\in\sK$ for almost every $x$ in $\Omega$.

Given $\underbar{u}$ in $H^1(\Omega;\overline{\sK})$ such that $J[\underbar{u}] < \infty$, it follows from direct methods in the calculus of variations (see \cite{G}) that minimizers exist in the space $A_{\uu}=\{v\in H^1(\Omega;\overline{\sK}): v-\uu\in H^1_0(\Omega;\bR^q)\}$.  In this paper we will refer to such minimizers as {\it finite energy minimizers} of $J$ in $\Omega$.

A question of interest for applications is whether minimizers $u$ of $J[\cdot]$ are smooth and whether they satisfy $u(x) \in \sK$ for all $x$ in $\Omega$.  One of the  difficulties in analyzing their regularity
is to find finite energy variations in $H^1(\Omega;\overline{\sK})$ from which one can extract useful information about their properties.

Our results are for $n=2$. We prove the following main theorem:

\begin{theorem}\label{uthm}
Assume $\Omega$ is a bounded $C^2$ domain in $\mathbb{R}^n, n=2, {\sK}$ is an open bounded, convex set in $\bR^q$, $q\geq1$, and \eqref{Po-St} and \eqref{El-St} hold. If $u\in H^1(\Omega;\overline{\sK})$ is a finite energy minimizer for $J$ in $\Omega$, then $u$ is in $C^{2,\delta}(\Omega)$ for all $0<\delta<1$, $u(\Omega) \subset \sK$, and $u$ satisfies the equilibrium equation
\begin{equation}\label{Eq}
\rm{div}\, F_P(u, Du) - F_u(u,Du)=f_u(u)\,\, \text{ on } \Omega.
\end{equation}
Moreover, if $\Omega'$ is an open set in $\Omega$ such that $\Omega' \subset\subset\Omega$, then $\text{dist }(u(\Omega'),\partial \sK) \geq c > 0$ where $c$ depends only on $J[u]$, $\Omega$, $\text{dist }(\Omega',\partial \Omega)$, $M_2$, $M$, and $\lambda$.
\end{theorem}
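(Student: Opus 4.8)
The plan is to isolate one quantitative statement, the interior distance bound $\operatorname{dist}(u(\Omega'),\partial\sK)\ge c$, and to deduce everything else from it. Once I know that on a fixed $\Omega'\subset\subset\Omega$ the minimizer takes values in the compact convex set $\overline{\sK_c}:=\{v:\operatorname{dist}(v,\partial\sK)\ge c\}\subset\sK$, the penalty $f$ and the coefficients $A^{\alpha\beta}_{ij},B^\alpha_i$ are of class $C^2$ with all bounds controlled by $M_2,M$ and $c$ on the range of $u$. Then for $\phi\in C_c^\infty(\Omega';\bR^q)$ and $|t|$ small the competitor $u+t\phi$ still lies in $\sK$, so \emph{two-sided} variations are admissible and $\tfrac{d}{dt}J[u+t\phi]\big|_{t=0}=0$ gives \eqref{Eq} weakly (the bound $D^2f\ge -MI_q$ guarantees the second variation is well behaved, so the first-variation computation is legitimate). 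Since $n=2$, $u$ is then a bounded weak solution of a uniformly elliptic system with $C^2$ coefficients, whence Morrey's two-dimensional regularity theorem yields $u\in C^{0,\alpha}_{\mathrm{loc}}$, and Schauder theory upgrades this successively to $C^{1,\alpha}_{\mathrm{loc}}$ and to $C^{2,\delta}_{\mathrm{loc}}$ for every $\delta<1$. Exhausting $\Omega$ by such $\Omega'$ gives $u\in C^{2,\delta}(\Omega)$, and the pointwise distance bound gives $u(\Omega)\subset\sK$.

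The heart of the matter is thus the distance estimate, which I would establish by comparison, using only minimality and the convexity of $\sK$. First, $F(v,P)\ge\lambda|P|^2-M_1|P|\ge\tfrac\lambda2|P|^2-\tfrac{M_1^2}{2\lambda}$ together with the fact that $f$, being continuous on $\sK$ with $f\to\infty$ at $\partial\sK$, attains an interior minimum $m_0>-\infty$, yields the global bound $\int_\Omega|Du|^2\le E_0$ with $E_0=E_0(J[u],\lambda,M_1,|\Omega|,m_0)$. Next, for small $s>0$ set $\sK_s=\{v\in\sK:\operatorname{dist}(v,\partial\sK)>s\}$ and let $\Pi_s$ be the nearest-point projection of $\bR^q$ onto the compact convex set $\overline{\sK_s}\subset\sK$; it is $1$-Lipschitz, equals the identity on $\overline{\sK_s}$, and satisfies $|D(\Pi_s\circ u)|\le|Du|$ a.e. On a ball $B=B_R(x_0)\subset\subset\Omega$ I compare $u$ with the admissible competitor $\eta\,(\Pi_s\circ u)+(1-\eta)u$, where $\eta$ is a cutoff supported in $B$; cancelling the integrands on $\{\operatorname{dist}(u,\partial\sK)\ge s\}$ and writing $d_u(x)=\operatorname{dist}(u(x),\partial\sK)$, minimality produces an inequality of the schematic form
\[
\int_{\{d_u<s\}\cap B}\!\big(f(u)-\Lambda_s\big)\,dx\;\le\;C\!\int_{\{d_u<s\}\cap B}\!|Du|^2\,dx\;+\;\frac{C}{R}\,\big(\operatorname{diam}\sK\big)\,\big|\{d_u<s\}\cap B\big|^{1/2}\big(E_0\big)^{1/2},
\]
with $\Lambda_s=\max_{\overline{\sK_s}}f$. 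This shows that the excess penalty on the "bad set'' is paid for by the \emph{local} Dirichlet energy there. Combined with the two-dimensional Poincaré control $\frac1{|B_\rho|}\int_{B_\rho}|u-\bar u_{B_\rho}|^2\le C\int_{B_\rho}|Du|^2$, it gives an $\varepsilon$-regularity statement: there is $\varepsilon_0=\varepsilon_0(\lambda,M_2,M)$ so that if $\int_{B_{2r}(x_0)}|Du|^2<\varepsilon_0$ then $u$ stays within $\overline{\sK_c}$ on $B_r(x_0)$, with $c$ depending only on the listed data.

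It remains to treat energy concentration. Since the total energy is finite, at most $N\le E_0/\varepsilon_0$ points $x_1,\dots,x_N\in\Omega'$ satisfy $\int_{B_r(x_i)}|Du|^2\ge\varepsilon_0$ for all small $r$; away from these, the $\varepsilon$-regularity above gives the distance bound directly. At a concentration point I again invoke the projection comparison: replacing $u$ by $\Pi_s\circ u$ on a small ball removes an arbitrarily large penalty at the cost of at most the Dirichlet energy already present, so the range of $u$ cannot accumulate at $\partial\sK$ there either. The semiconvexity $D^2f\ge -MI_q$ is what makes this quantitative and, crucially, \emph{independent of the blow-up rate of $f$}: it is the geometry of the nonexpansive projection onto the convex sublevel sets, not the growth of $f$, that forces the gain, which is why the final constant $c$ depends on $M,\lambda$ and the energy but not on the modulus of the singularity. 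Assembling these two cases and taking the minimum over the finitely many concentration points yields $\operatorname{dist}(u(\Omega'),\partial\sK)\ge c$.

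I expect the $\varepsilon$-regularity/comparison step to be the main obstacle. The difficulty is to convert the measure-theoretic information "large penalty costs local energy'' into a \emph{uniform pointwise} lower bound on $\operatorname{dist}(u,\partial\sK)$, and in particular to rule out the range reaching $\partial\sK$ exactly at the energy-concentration points, where the small-energy hypothesis fails. The delicate requirement that the estimate be independent of how fast $f$ blows up forces the argument to rely on the $1$-Lipschitz retractions onto the convex sets $\overline{\sK_s}$ together with the semiconvexity bound, rather than on any quantitative growth of $f$; making the cutoff error and the annular gradient contribution subordinate to the penalty gain, uniformly in $s\to0$, is the technical crux on which the whole theorem rests.
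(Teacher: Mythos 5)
Your reduction has the right shape in principle: once $\operatorname{dist}(u(\Omega'),\partial\sK)\ge c$ is known, two-sided variations $u+t\phi$ are admissible, \eqref{Eq} follows weakly, and two-dimensional regularity theory upgrades $u$ to $C^{2,\delta}$ (though for systems with quadratic gradient terms you should invoke minimizer regularity à la Giaquinta--Giusti rather than ``bounded weak solution $+$ Morrey,'' which does not by itself start the bootstrap). But this places the entire weight of the theorem on the distance bound, and the mechanism you propose for it cannot deliver it. The projection comparison with $\eta(\Pi_s\circ u)+(1-\eta)u$ yields only \emph{integral} information: writing $d_u(x)=\operatorname{dist}(u(x),\partial\sK)$, the penalty mass $\int_{\{d_u<s\}}(f(u)-\Lambda_s)\,dx$ is controlled by local Dirichlet energy and cutoff errors. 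Since \eqref{Po-St} prescribes no blow-up rate for $f$, this $L^1$ control of $f(u)$ is perfectly compatible with $u$ touching $\partial\sK$ on a set of measure zero: as $s\to 0$ the bad set shrinks, both sides of your schematic inequality tend to zero, and no pointwise bound emerges --- neither at generic points nor, a fortiori, at your concentration points, where the small-energy hypothesis is unavailable. Worse, your $\varepsilon$-regularity claim, with $c$ depending only on $\lambda,M_2,M$ and the energy but \emph{not} on $f$, is false: take $q=1$, $\sK=(-1,1)$, $F=|Du|^2$, and $f\ge 0$ convex with $f(1-\delta)=0$ and blow-up confined to a $\delta/2$-neighborhood of $\pm1$; then $u\equiv 1-\delta$ is a minimizer with zero energy, $M=0$, $\lambda=1$, $M_2$ fixed, yet $d_u=\delta$ is arbitrarily small. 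So the constant must see $f$ itself (through its sublevel sets), and any argument that deliberately avoids the structure of $f$ near $\partial\sK$, as yours does, proves too much. Your final paragraph names exactly this conversion step as ``the technical crux''; that crux \emph{is} the theorem, and it is the step your proposal leaves unproved. (Note also that the semiconvexity $D^2f\ge -MI_q$, which you credit with ``forcing the gain,'' in fact plays no role in your comparison --- only convexity of $\sK$ does.)

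The paper's route is genuinely different and is worth internalizing. Because two-sided variations are inadmissible before the theorem is proved, it first establishes continuity (Proposition \ref{cont}, via \cite{BP}) and then $H^2_{loc}$ (Lemma \ref{h2est}) using \emph{one-sided} variations adapted to the convex geometry: $w_l=\zeta_l^2(v_l-u)|\nabla^h u|^2$, where $v_l$ is an interior point chosen so that $u(\tilde E_l)$ lies in a cone $C^+_{v_l}$ along whose rays the convex function $f_0=f+\frac M2|v|^2$ decreases (Corollary \ref{decf0}); combined with the Evans--Kneuss--Tran difference-quotient variations this gives the two complementary inequalities \eqref{ineq-1} and \eqref{ineq-2}, which close to an $H^2$ bound. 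Then, in Section \ref{s3}, the pointwise statement is extracted from the \emph{strong equation}: testing \eqref{Eq} with $\zeta^2D^2_{\upsilon\upsilon}u$ in polar coordinates about $x_0$, the angular term is integrated by parts on circles (after showing a.e.\ circle avoids $\Lambda$) and is controlled by $M\int|Du|^2$ --- this is where semiconvexity is genuinely used --- while the radial term integrates exactly, by the fundamental theorem of calculus in $\rho$, to the boundary term $-r^{-1}\int_{\partial B_r}f(u)\,ds$. This yields $r^{-1}\int_{\partial B_r(x_0)}f(u)\,ds\le C$ uniformly in $r$, and continuity then gives $2\pi f(u(x_0))\le C$, hence $\Lambda=\emptyset$ and the distance estimate. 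In short, converting $L^1$ control of $f(u)$ into pointwise control requires the full first-variation information (the equation) together with $H^2_{loc}$; a single comparison against a retraction cannot do it, and that is the missing idea in your proposal.
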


Minimizers $u$ of (\ref{Energy}) were investigated by Evans, Kneuss and Tran in \cite{E}.  Assuming that $n \geq 2$, $F=F(v,Dv)$ satisfies certain growth conditions and is uniformly strictly quasi-convex,
and $f$ satisfies (\ref{Po-St}) with $M=0$, they proved the following partial regularity result: there is an open subset $\Omega_0$ of $\Omega$ such that $|\Omega\setminus\Omega_0|=0$, $u \in C^2(\Omega_0)$, and $u(\Omega_0)\subset\sK$. In particular it follows that $u$ satisfies the equilibrium equation (\ref{Eq}) on $\Omega_0$ in this case.

They also proved in \cite{E} that if, in addition, $F=F(Dv)$, $F(\cdot)$ is convex, and $f(v)$ is smooth and convex on $\sK$,  then a finite energy minimizer $u$ is in $H^2_{loc}(\Omega)$. We use a similar approach here for part of our analysis. They considered variations $u+t \phi_k$ where $\phi_k$ is given by
\begin{align*}
\phi_k(x; h)&=[\zeta^2(x)u(x+h e_k)+\zeta^2(x-h e_k) u(x-h e_k)-(\zeta^2(x)+\zeta^2(x-h e_k))u(x)]h^{-2}\\
&=\nabla^{-h}_k(\zeta^2\nabla_k^h u)\quad \rm{for}\,\, 1\le k\le n,
\end{align*}
for $\zeta\in C^2_c(\Omega)$ and $\nabla^h_k w(x)=h^{-1}[w(x+he_k)-w(x)]$ for $h\ne 0$ and sufficiently small.
They showed that for $0\le t<\ot(h)$, $u(x)+t \phi_k(x)\in\overline\sK$.
Using the definition of $\phi_k$ and the convexity of $f$ they proved that
\begin{equation*}\label{Est-0}
\int\limits_\Omega f(u+t\phi_k)dx \le \int\limits_\Omega f(u)dx\quad\rm{for}\,\, 0\le t<\ot(h).
\end{equation*}
and hence
$$0\le J(u+t \phi_k)-J(u)\le \int\limits_\Omega(F(D(u+t \phi_k))-F(Du)) dx.$$
Dividing by $t$ and letting $t\to 0$ gives
$$0\le\int\limits_\Omega(F_P( Du): D\phi_k) dx \text{ for } 1\le k\le n.$$
For $F=F(Dv)$ this inequality leads to $u\in H^2_{\rm{loc}}(\Omega)$.  (See \cite{E}, Thm 4.1.)

Here we have $F=F(v,Dv)$ and $f$ is not convex. However, our assumption (\ref{Po-St}) implies that
$$f_0(v)\equiv f(v) + \frac{M}{2}|v|^2$$
is a convex function on $\sK$ and by \cite{E}

\begin{equation}\label{Est-0-2}
\int\limits_\Omega f_0(u+t\phi_k)dx \le \int\limits_\Omega f_0(u)dx\quad\rm{for}\,\, 0\le t<\ot(h).
\end{equation}
Using this we can argue just as above to show that a minimizer $u$ of J satisfies
\begin{equation}\label{Est-1-1}
0\le\int\limits_\Omega(F_P(u, Du): D\phi_k +F_u(u,Du)\cdot \phi_k -M u\cdot \phi_k) dx.
\end{equation}
From this and \eqref{El-St} it follows that
\begin{equation}\label{Est-7}
\frac{\lambda}{2}\int\limits_{\Omega} |\nabla^h Du|^2\zeta^2\ dx \le C_0\int\limits_{\Omega}|\nabla^h u|^2 |Du|^2\zeta^2\ dx + C_1
\end{equation}
where $\lambda$ is the constant defined in our assumption (\ref{El-St}).

To obtain an $H^2_{{loc}}$ estimate, we additionally need that $u$ is continuous and a second inequality.  In Section \ref{s2}, we show that when $n=2$, our assumptions \eqref{Po-St}, \eqref{El-St}, and a result in \cite{BP} imply that finite energy minimizers of $J$ are continuous in $\Omega$.  We then construct additional variations $u+tw_l$ in $H^1(\Omega;\overline{\sK})$ so that  $w_l$ satisfies
\eqref{Est-0-2} and \eqref{Est-1-1},
with $\phi_k$ replaced by $w_l$.  We use this to prove a second inequality, from which we obtain the $H^2_{{loc}}$ regularity of minimizers.  In Section \ref{s3} we use this result to prove Theorem \ref{uthm}.

Our formulation of the constrained energy (\ref{Energy}) and the assumptions \eqref{Po-St} and \eqref{El-St} is motivated by the constrained Landau-de Gennes Q-tensor energy for nematic liquid crystals.  This energy is given by
\begin{equation}\label{Energy2}
I_{LdG}[Q]=\int\limits_\Omega\bigl [G(Q,DQ) + \Psi_b(Q)] dx.
\end{equation}
where
\begin{align*}
G(Q,DQ)  =  &  L_1 |\nabla Q|^2  +  L_2 \cdot D_{x_j} Q_{ij} \cdot D_{x_k}Q_{ik} + L_3 \cdot D_{x_j} Q_{ik}\cdot D_{x_k}Q_{ij}\\
& + L_4 \cdot Q_{\ell k}\cdot D_{x_\ell} Q_{ij}\cdot D_{x_k}Q_{ij} + L_5 \cdot\epsilon_{\ell j k}\cdot
Q_{\ell i}\cdot D_{x_j} Q_{ki}\\
& \equiv L_1 I_1 +  L_2 I_2 +  L_3 I_3 +  L_4 I_4 +  L_5 I_5
\end{align*}
and $\Psi_b(Q) = T f_{ms}(Q) - \kappa |Q|^2$.
The constants, $L_1,L_2,L_3,L_4,$ and $L_5$ are  material-dependent elastic constants, $T$ and $\kappa$  are positive constants, and $\epsilon_{\ell j k}$ is the Levi-Civita tensor.  The function
$f_{ms}(Q)$ is a specific function (called the Maier-Saupe potential) defined on
\begin{align*}
\sM =  \{Q \in \mathbb{M}^{3\times 3}:  &  Q=Q^t, \text{ tr } Q=0, \text{ and } -\frac{1}{3} < \lambda(Q) < \frac{2}{3}\\
&\text{    for all eigenvalues } \lambda(Q) \text{ of } Q\}.
\end{align*}
It is defined abstractly using probability densities on a sphere that represent possible orientations of liquid crystal molecules.  (See Section \ref{s4} for the definition of $f_{ms}$.)   The bulk term $\Psi_b$ and the Maier-Saupe potential $f_{ms}$ were introduced and investigated in the papers \cite{BM} by Ball and Majumdar and \cite{K} by Katriel, Kventsel, Luckhurst and Sluckin.
It is known that $f_{ms}$ is convex. Moreover, $f_{ms}$ is bounded below and $f_{ms}(Q) \to \infty$ for $Q$ in $\sM$ with $Q \to \partial \sM$; hence the same is true for $\Psi_b(Q)$.  As in (\ref{Po-St}), we set $\Psi_b(Q)=\infty$ for $Q \in S_0\setminus \sM$ where $S_0=\{Q\in \bM^{3\times 3}: Q=Q^t, tr\, Q=0\}.$  Thus $\Psi_b$ blows up at $\partial \sM$ as in $(1.2.2)$,
with $f$ replaced by $\Psi_b$ and $\sK$ replaced by $\sM$.  It follows that finite energy minimizers $Q$ of $I_{LdG}$ satisfy $Q(x) \in \sM$ almost everywhere in $\Omega$, so that the eigenvalues of $Q(x)$ are in $(-\frac{1}{3},\frac{2}{3})$ almost everywhere in $\Omega$.
Conditions on $L_1,\cdots, L_4$ have been identified so that minimizers of $I_{LdG}$ exist in
\[
A_{Q_0} \equiv \{Q \in H^1(\Omega;\overline{\sM})=\{Q-Q_0\in H^1_0(\Omega;\bM^{3\times 3})\}\]
provided that $Q_0\in H^1(\Omega;\overline{\sM})$ and that $I_{LdG}[Q_0]<\infty$. (See (\ref{47}) and \cite{L}.)
It was stated in \cite{BM} that for $\Omega$ in $\bR^3$, if
${Q_0}(\overline \Omega)\subset\subset\sM$, minimizers in $A_{Q_0}$ of the energy
$$\int\limits_\Omega\bigl [L_1 |DQ)|^2 + \Psi_b(Q)] dx $$
with $L_1>0$ are smooth in $\Omega$ and valued in $\sM$; thus their eigenvalues are in  $(-\frac{1}{3},\frac{2}{3})$ at all points in $\Omega$. A sketch of a proof of this statement is included in \cite{B1}. (See also \cite{BP}.)
Such minimizers are called "physically realistic." Additional features for minimizers, $\tilde Q$ of $I_{LdG}$ with $\Omega$ in $\bR^3$ were obtained by Geng and Tong in \cite{GT}. In particular, assuming specific conditions on $G(Q, DQ)$ they proved higher integrability properties for $|D\tilde{Q}|.$

The elastic term with coefficient $L_4$ in $I_{LdG}$ is called the "cubic term."  When $L_4 \neq 0$, the energy density is quasilinear.  This makes it difficult to analyze the behavior of minimizers in this case.

Physicists have computed the elastic coefficients $L_1,\cdots, L_4$ in terms of the elastic coefficients $K_1, \cdots, K_4$ that account for the elastic energy of splay, twist and bend that occur in the well-known Frank energy density, which models liquid crystals in terms of functions $n=n(x)$ valued in $\mathbb{S}^2$.  They found that $L_4=0$ if and only if $K_1=K_3$, which is nonphysical for many applications.  Thus it is desirable to consider the energy $I_{LdG}$
with $L_4 \neq 0$. It is interesting to note that when $L_4\ne 0$, the unconstrained Landau-de Gennes energy given by \eqref{Energy2} with $\Psi_b(Q)$ replaced by a polynomial  is unbounded from below.  Thus  minimizers do not  exist in general for boundary value problems with this energy. See \cite{BM}.

For $\Omega$ in $\bR^2$, we proved H\"{o}lder continuity of finite energy minimizers in \cite{BP} under general conditions for energy functionals of the form
$$\sF(Q)=\int_\Omega [F_e(Q(x),\nabla Q(x)) + f_b(Q(x))] dx$$
by using harmonic and elliptic replacements to construct finite energy comparison functions.
In particular we established that finite energy minimizers to the quasilinear constrained energy $I_{\text {LdG}}$ in \eqref{Energy2} under the coercivity condition \eqref{47}
are H\"{o}lder continuous in $\Omega$. We also
proved under the additional assumption $L_2=L_3=0$ that finite energy  minimizers for \eqref{Energy2} satisfy the "physicality condition", $Q(x) \in \sM$ for all $x \in \Omega.$  Here we establish this property without requiring  the additional assumption.

In Section \ref{s4} of this paper we describe a connection between the constrained energies $J[u]$ and $I_{LdG}[Q]$.  Using Theorem \ref{uthm}, we prove in Theorem \ref{thm41} that under appropriate coercivity conditions on $L_1,\cdots,L_4$, finite energy minimizers of $I_{LdG}$ with all elasticity terms are in $C^2(\Omega)$; moreover, they satisfy a strong physicality condition:  if $\Omega'$ is an open set such that $\Omega' \subset \subset \Omega$, then $Q(\Omega') \subset \sM$ and $\text{dist} (Q(\Omega'), \partial \Omega) \geq c > 0.$  Thus in compact subsets of $\Omega$, the eigenvalues of minimizers are contained in closed subintervals of the physical range $(-\frac{1}{3},\frac{2}{3})$.

\section{Continuity and $H^2_{\text {loc}}$ estimates for minimizers in two-dimensional domains.}\label{s2}

Assume that $\Omega$ is a bounded $C^2$ domain in $\bR^2$. Let $\Lambda=\{x \in \Omega: u(x) \in \partial \sK\}.$   In this section we will show that finite energy minimizers u of J are locally H\"{o}lder continuous in $\Omega$, $C^2$ in $\Omega \setminus \Lambda$,  and
globally H\"{o}lder continuous in $\overline \Omega$ if their boundary values are sufficiently smooth.  In addition, they are in $H^2_{\text {loc}}(\Omega)$.

Our proof of the first statement is an application of \eqref{Po-St}, \eqref{El-St}, and a result in \cite{BP} for
two-dimensional domains.  We will ultimately prove (in Section \ref{s3}) that $\Lambda=\emptyset$.

\begin{proposition}\label{cont}
Assume that $\Omega$ is a bounded $C^2$ domain in $\bR^2$.
Assume $u=u(x)$ is in $H^1(\Omega; \overline {\sK})$ and $u$ is a finite energy minimizer of $J$ in $\Omega$.  Let $\Lambda=\{x \in \Omega: u(x) \in \partial \sK\}.$

a) If $\Omega'$ is a connected open set with $\Omega' \subset \subset \Omega$, there exist constants $0<\sigma<1$ and $c_1>0$ such that $\omega(d)=c_1 d^{\sigma}$ is a modulus of continuity for $u$ in $\Omega'$.  The constants $\sigma$ and $c_1$ depend only on $J(u)$, $\Omega$, $\text{dist }(\Omega',\partial \Omega)$, $M_1$, and the constants $M$ and $\lambda$ in \eqref{Po-St} and \eqref{El-St}.

b)  If $u_0 \in H^1(\Omega;\overline \sK)$ and $J(u_0) < \infty$ such that $u_0 \in C^{0,1}(\partial \Omega;\overline \sK)$ and $\int_{\partial \Omega} f(u_0) ds < \infty$ and if $u \in H^1(\Omega;\overline \sK)$ is a minimizer of $J$ in $A_{u_0}=\{v \in H^1(\Omega;\overline \sK):v-u_0 \in H^1_0(\Omega;\mathbb{R}^q)\}$, then there exists a constant $0<\beta<1$ such that $u \in C^{\beta}(\overline \Omega;\overline \sK)$.  The modulus of continuity, $\omega(d)=c_2 d^{\beta}$, has constants depending only on $J(u)$, $\Omega$, $M_1$, $M$, $\lambda$ and $u_0$.

c)  The minimizer $u$ is continuous in $\Omega$ and $C^{2,\delta}$ in the open set $\Omega \setminus \Lambda$ for all $0<\delta<1$.
\end{proposition}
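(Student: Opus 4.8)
The plan is to treat the three parts in sequence, obtaining continuity first and then bootstrapping interior smoothness away from $\Lambda$.

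\emph{Parts (a) and (b).} Here I would not prove continuity from scratch but reduce to the general two-dimensional H\"older regularity theorem of \cite{BP}. The first step is to verify that $J$ falls under the hypotheses of that theorem: by \eqref{El-St} the elastic density $F(v,P)$ is quadratic in $P$ with $C^2$ coefficients and uniformly coercive, $A^{\alpha\beta}_{ij}(v)p^i_\alpha p^j_\beta\ge\lambda|P|^2$, while \eqref{Po-St} provides the barrier and lower-bound conditions on $f$, so that $f_0(v)=f(v)+\tfrac{M}{2}|v|^2$ is convex and bounded below on $\sK$. With these structural facts in hand, the interior estimate of \cite{BP} — built from harmonic/elliptic replacements on small balls together with the minimality of $u$ to force a Morrey-type decay of the Dirichlet energy $\int_{B_r}|Du|^2$ — yields the exponent $\sigma$ and constant $c_1$ of part (a), with the stated dependence on $J[u]$, $\Omega$, $\operatorname{dist}(\Omega',\partial\Omega)$, $M_1$, $M$, and $\lambda$. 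For part (b) I would invoke the boundary version of the same result; the extra hypotheses $u_0\in C^{0,1}(\partial\Omega;\overline\sK)$ and $\int_{\partial\Omega}f(u_0)\,ds<\infty$ are exactly what is needed to bound the energy of the comparison functions in boundary half-balls, giving H\"older continuity up to $\partial\Omega$ with exponent $\beta$.

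\emph{Part (c), continuity in $\Omega$.} This is immediate from part (a): applying (a) on an exhausting sequence of connected open sets $\Omega'\subset\subset\Omega$ shows that $u$ is continuous at every interior point.

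\emph{Part (c), $C^{2,\delta}$ regularity on $\Omega\setminus\Lambda$.} Fix $x_0\in\Omega\setminus\Lambda$. Since $u(x_0)\in\sK$ and $u$ is continuous, there is a ball $B\subset\subset\Omega\setminus\Lambda$ on which $u(B)$ is compactly contained in $\sK$; on a neighborhood of $u(B)$ the function $f$ is $C^2$ with bounded derivatives, so the constraint is inactive and $u$ is an unconstrained local minimizer on $B$, hence a weak solution of the equilibrium system \eqref{Eq} there. I would then bootstrap: (i) the $H^2_{\mathrm{loc}}$ estimate of this section, localized to $B$, gives $u\in H^2_{\mathrm{loc}}(B)$; (ii) since $n=2$, $Du\in H^1_{\mathrm{loc}}$ embeds into $L^p_{\mathrm{loc}}$ for every $p<\infty$, so the right-hand side of \eqref{Eq} — quadratic in $Du$ plus the now-smooth term $f_u(u)$ — lies in $L^p_{\mathrm{loc}}$ for all $p$; (iii) freezing $u$ in the $C^2$ coefficients $A^{\alpha\beta}_{ij}(u),B^\alpha_i(u)$ turns \eqref{Eq} into a linear strongly elliptic system with continuous coefficients, so $L^p$ estimates give $u\in W^{2,p}_{\mathrm{loc}}(B)$ for every $p$, whence $u\in C^{1,\gamma}_{\mathrm{loc}}$ for all $\gamma<1$; (iv) the coefficients and the right-hand side are then $C^{0,\gamma}$, and Schauder estimates upgrade $u$ to $C^{2,\delta}_{\mathrm{loc}}(B)$ for all $0<\delta<1$.

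\emph{Main obstacle.} The delicate point is step (i). The difference-quotient inequality \eqref{Est-7} carries on its right-hand side the critically-scaling term $\int|\nabla^h u|^2|Du|^2\zeta^2$, which is not controlled by its left-hand side by soft means; this borderline quadratic growth is exactly what makes closing the $H^2$ estimate nontrivial. It is here that the continuity of $u$ must be used — on a small enough ball the oscillation of $u$ is small, which together with the second variational inequality coming from the auxiliary variations $u+tw_l$ announced in the text lets one absorb the bad term. Everything downstream (steps (ii)--(iv)) is standard elliptic bootstrapping once this term has been tamed, and it is essential that we work in dimension two, where $H^1\hookrightarrow L^p$ for all finite $p$ makes the quadratic right-hand side tractable.
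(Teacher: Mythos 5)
Your treatment of parts (a), (b), and the continuity assertion in (c) matches the paper's: both reduce to the H\"older regularity theorems of \cite{BP} after checking that \eqref{Po-St} and \eqref{El-St}, via the convex splitting $f_0(v)=f(v)+\tfrac{M}{2}|v|^2$, place $J$ under the hypotheses there. Where you genuinely diverge is the $C^{2,\delta}$ claim in (c). The paper never touches the $H^2_{\mathrm{loc}}$ machinery at this stage: on a ball $B_{4r}(x_0)\subset\subset\Omega\setminus\Lambda$ the constraint is inactive and $f_u(u)$ is bounded, so $u$ is a continuous weak solution of the quasilinear system \eqref{Eq}, and the paper invokes Giaquinta's two-dimensional result (\cite{G}, Ch.~VI, Proposition 1), which takes continuous weak solutions of such systems directly to $W^{2,p}$ for some $p>2$ --- this is precisely the tool that absorbs the critical quadratic term you flag in your ``main obstacle'' paragraph --- and then finishes with the linear theory of \cite{G}, Ch.~III. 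You instead route the argument through Lemma \ref{h2est} (the cone-variation $H^2_{\mathrm{loc}}$ estimate) followed by a Sobolev/$L^p$/Schauder bootstrap. This is workable and, importantly, not circular: the proof of Lemma \ref{h2est} uses only part (a) of Proposition \ref{cont} (the interior modulus of continuity on $\Omega''$), never part (c), so your reordering --- (a), (b), continuity, then the $H^2$ lemma, then (c) --- is legitimate; but you should say this explicitly, since in the paper Proposition \ref{cont} precedes and is an ingredient of Lemma \ref{h2est}, and a reader of your proof needs to know the dependence runs only one way. As for what each route buys: the paper's is more economical, since away from $\Lambda$ the problem is locally unconstrained and the cone variations are superfluous there; yours front-loads the hardest estimate of the section into part (c), but in exchange requires only linear $L^p$ and Schauder theory downstream rather than the $W^{2,p}$ theorem for quasilinear systems with continuous weak solutions.
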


\begin{proof}
In \cite{BP} we investigated finite energy minimizers $\tilde Q(x)$ of a constrained energy of the form
$$\tilde J (Q)=\int\limits_\Omega(F_e(Q,DQ)+f_b(Q))dx$$
over all $Q \in H^1(\Omega;\overline \sM)$ such that $Q-Q_0 \in H^1_0(\Omega;S_0)$  and $\tilde J(Q_0) < \infty$, where $\sM$ is an open bounded convex subset of
$S_0=\{Q \in \mathbb{M}^{3\times3}:Q=Q^t \text{ and tr } Q=0\}$ and $\mathbb{M}^{3\times3}$ is the set of $3\times3$ real-valued matrices.
Note that $S_0$ is isometrically isomorphic to $\mathbb{R}^5$.  Here $f_b(Q)=g_b(Q)-\kappa |Q|^2+b_0$ for $Q \in S_0$ (and $\infty$ otherwise)
and it is assumed that $g_b$ is a smooth convex function defined on $\sM$ such that $g_b(Q) \to \infty$ as $Q \to \partial \sM$ with $Q \in \sM$.
Since $f(v)=f_0(v)-\frac{M}{2} |v|^2$,  our assumptions (\ref{Po-St}) and (\ref{El-St})  on $f(v)$ and $F(v,Dv)$ correspond to the assumptions (1.2) and (1.4) on $f_b(Q)$ and $F_e(Q,DQ)$ in \cite{BP} that were used to prove the same H\"{o}lder continuity on $\tilde Q=\tilde Q(x)$ that we wish to prove here for $u=u(x)$.  The change from energy densities that depend on the variable $Q \in S_0$ to those that depend on $u \in \mathbb{R}^q$ is a trivial one, and the arguments in the proofs of Theorem 1 and 2 go through to prove a) and b).

To prove c), we first note that  by a), $u$ is continuous in $\Omega$ and hence $u^{-1}(\sK) = \Omega \setminus \Lambda$ is an open set.  To verify that $u$ is $C^{2,\delta}$ on this set, we argue as in \cite{BP}, Corollary 2.  Indeed, assume $B_{4r}(x_0) \subset \subset \Omega \setminus \Lambda$.  Note that $f$ is bounded and $C^2$ on a neighborhood of $u(B_{4r}(x_0))$.  We can then take smooth first variations for $J$ about $u$ supported in $B_{4r}(x_0)$ to conclude that $u$ is a weak solution of (\ref{Eq}) on $B_{4r}(x_0)$.  We can apply the result from \cite{G}, Ch. VI, Proposition 1 asserting that in two space dimensions a continuous weak solution of (\ref{El-St})-(\ref{Eq})
with $f_u(u(x))$ bounded is in $W^{2,p}(B_{3r}(x_0))$ for some $p > 2$ and thus its first derivatives are H\"{o}lder continuous on $B_{2r}(x_0)$.  Now we can apply techniques from linear elliptic theory in \cite{G}, Ch. III.  Taking (\ref{El-St}) into account these lead to $u \in C^{2,\delta}(B_{r}(x_0)).$
\end{proof}

Our next objective is to show that $u\in H^2_{\rm{loc}}(\Omega)$.
To define variations $u+tw_l$ that will provide a proof,
we will need several properties of the convex potential
$$f_0(v)=f(v)+\frac{M}{2}|v|^2$$
in a family of cones $\mathcal{C}$ with vertices in the convex set $\sK$.
For ease of notation, assume from now on without loss of generality that $0$ is in $\sK$.  Since $\sK$ is a bounded convex set in $\bR^q$, it is starlike with respect to $0$.  Let $\mathbb{S}^{q-1}=\partial B_1(0)$ where $B_1(0)$ is the open unit ball in $\bR^q$ centered at $0$.  Let  $g:\mathbb{S}^{q-1} \to \bR^+$ be in $C^{0,1}(\mathbb{S}^{q-1})$ such that the map

$$\nu \in \mathbb{S}^{q-1} \to g(\nu) \nu \in \bR^q$$
is a parametrization of $\partial \sK$. Define $0<m_1<m_2$ by
\begin{equation}\label{m-1}
m_1=inf\{g(\nu):\nu \in \mathbb{S}^{q-1}\} \text { and } m_2=sup\{g(\nu):\nu \in \mathbb{S}^{q-1}\}.
\end{equation}
Define $G(x): \overline{B_1(0)} \to \overline \sK$ by
\begin{equation}\label{Gdefn}
G(x)=
\begin{cases}
g(\frac {x}{|x|}) x  & \text{ if $x \neq 0$}\\
0 & \text{ if $x=0$}.\\
\end{cases}
\end{equation}
Thus $G$ is a bi-Lipschitz continuous map from $\overline{B_1(0)}$ onto $\overline \sK$.  Let $\mathcal{Y}_{\mu}=G(B_{\mu}(0))$ for $0<\mu<1$.  Then $\mathcal{Y}_{\mu}$ is an open convex subset of $\sK$ and $\mathcal{Y}_{\mu} \uparrow \sK$ as $\mu \uparrow 1$.  Fix $r_0>0$ and $0 < \mu_0<1$  so that $\overline{B_{r_0}(0)} \subset \mathcal{Y}_{\mu_0}$.

{\bf Definition.} We define a family of cones $\mathcal{C}$ as follows:  For each $v$ in ${\sK} \setminus \overline{B_{r_0}(0)}$, we define the cone $C_v^-$
to be the closed half-cone with vertex $v$, axis containing the ray from $v$ to $0$, and aperture $\alpha=\alpha(v)$ in $(0,\frac {\pi}{2})$ determined by $\sin \alpha = \frac {r_0}{|v|}.$  (See Figure \ref{fig: fig. 1}.)  The cone $C_v^+$ is the reflection of $C_v^-$ about the point $v$, i.e.
$$C_v^+=\{w=v+\xi:\xi \in \mathbb{R}^q \text{ and } v-\xi \in C_v^-\}.$$
We define $\mathcal{C}$ to be the family of all cones, $C_v^-$ and $C_v^+$, with $v$ in ${\sK} \setminus \overline{B_{r_0}(0)}$.

\begin{figure}[h!]
  \centering

  \includegraphics[width=7in]{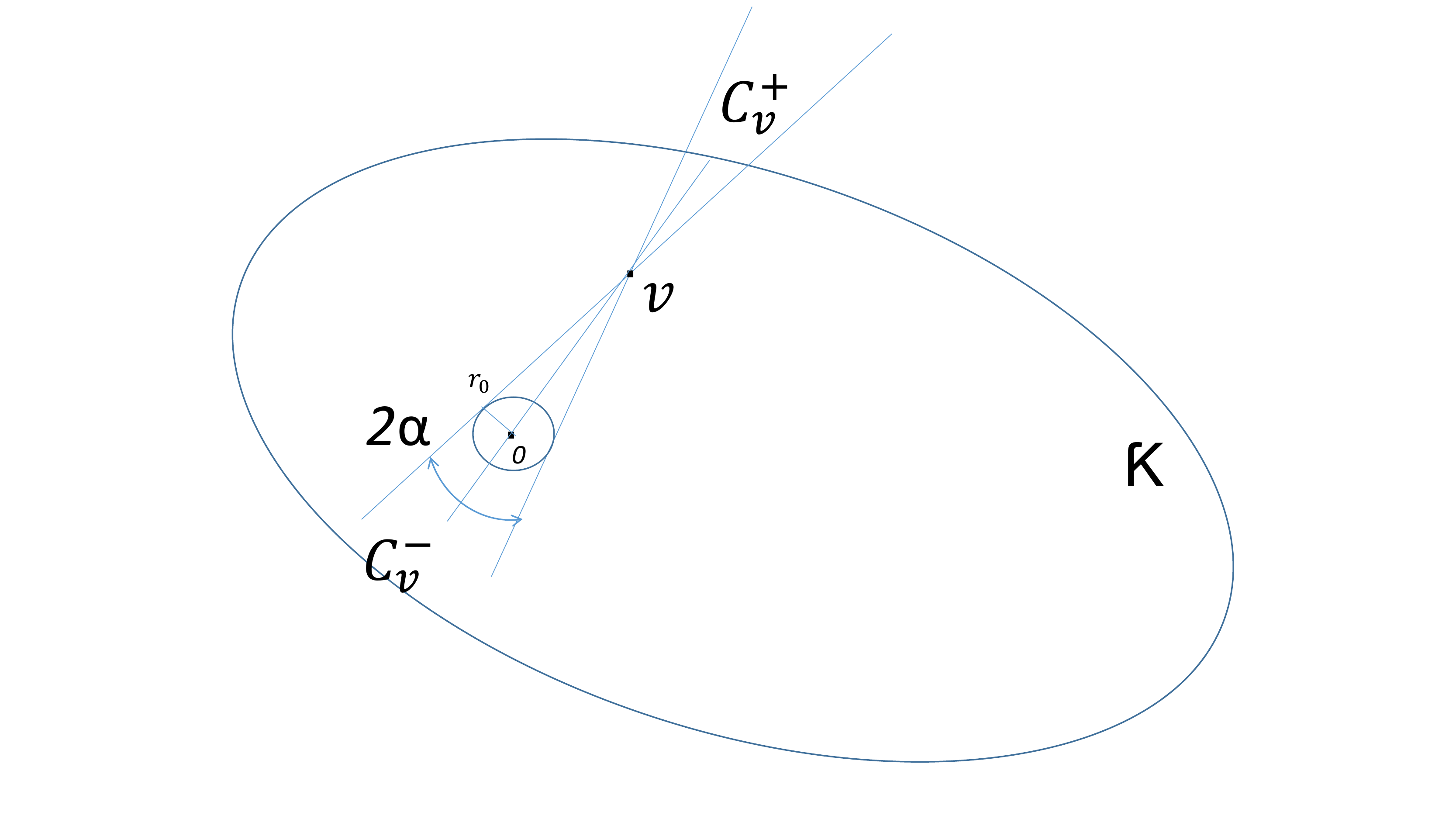}
  \caption{The cones $C_v^-$ and $C_v^+$.}
  \label{fig: fig. 1}
\end{figure}

For $v$ as above, the ball $\overline{B_{r_0}(0)}$ is contained in $C_v^-$ and is tangent to its boundary.  Thus each ray in $C_v^-$ with initial point $v$ intersects $\partial B_{r_0}(0)$ at least once.  Also $r_0<|v| \leq m_2$ and thus there exists $\alpha_0 \in (0,\frac{\pi}{2})$ such that
\begin{equation}\label{alpha0def}
1>\sin \alpha \geq \frac {r_0}{m_2} \equiv \sin \alpha_0 > 0 \text{ for all } v \in  {\sK} \setminus \overline{B_{r_0}(0)}.
\end{equation}

The result below follows from (\ref{alpha0def}) and the symmetry of $C_v^-$ and $C_v^+$.
\begin{proposition} \label{ballprop}
Assume $v \in {\sK} \setminus \overline{B_{r_0}(0)}$, $z$ is a point on the axis of $C_v^+$ with $z \neq v$, and $\gamma$ satisfies $|z-v| \geq \gamma > 0$.  If  $r>0$ satisfies $r \leq \gamma \sin {\alpha_0}$, then
\begin{equation}\label{coneineq}
\overline{B_r(z)} \subset C_v^+.
\end{equation}
\end{proposition}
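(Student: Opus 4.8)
The plan is to reduce the claim to the elementary fact that, for a closed convex cone, the closed ball centered at an interior point with radius equal to the distance from that point to the boundary is contained in the cone, and then to compute that distance for a point on the axis. First I would fix coordinates by setting $e = v/|v|$, the unit vector pointing from $0$ to $v$; by the definition of $C_v^+$ as the reflection of $C_v^-$ about $v$, this $e$ is the direction of the axis of $C_v^+$, which opens away from the origin. Since $z$ lies on the axis of $C_v^+$ with $z \ne v$, I may write $z = v + se$ with $s = |z-v| > 0$, and by hypothesis $s \ge \gamma$. By construction a point $w$ belongs to $C_v^+$ precisely when the angle between $w-v$ and $e$ is at most $\alpha$, where $\sin\alpha = r_0/|v|$; because $\alpha \in (0,\tfrac{\pi}{2})$, the half-cone $C_v^+$ is a closed convex set containing $z$ in its interior.

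The key step is to compute $\operatorname{dist}(z,\partial C_v^+)$. Writing $w - v = a\,e + b$ with $a = (w-v)\cdot e$ and $b \perp e$, membership $w \in C_v^+$ is equivalent to $a \ge 0$ together with $|b| \le a\tan\alpha$, so $\partial C_v^+$ is the lateral surface $|b| = a\tan\alpha$. For $w \in \overline{B_r(z)}$ one has $w - z = (a-s)e + b$, hence $(a-s)^2 + |b|^2 = |w-z|^2 \le r^2$. A one-line minimization gives $\min_a\bigl[(a-s)^2 + a^2\tan^2\alpha\bigr] = s^2\sin^2\alpha$, attained at $a = s\cos^2\alpha$; equivalently, the perpendicular from $z$ meets the lateral surface at distance $s\cos\alpha > 0$ from $v$ (hence genuinely on the cone, not on its extension behind the vertex), so $\operatorname{dist}(z,\partial C_v^+) = s\sin\alpha$.

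Combining the hypotheses then closes the argument. Using $s \ge \gamma$ together with $\sin\alpha \ge \sin\alpha_0$ from \eqref{alpha0def}, I obtain
$$ r \le \gamma \sin\alpha_0 \le \gamma \sin\alpha \le s\sin\alpha = \operatorname{dist}(z,\partial C_v^+). $$
Since $C_v^+$ is convex and $z$ is an interior point whose distance to the boundary is at least $r$, the closed ball $\overline{B_r(z)}$ is contained in $C_v^+$, which is exactly \eqref{coneineq}. Concretely, for $w \in \overline{B_r(z)}$ one first checks $a \ge s - r > 0$ (using $r \le s\sin\alpha < s$), and then $|b|^2 \le r^2 - (a-s)^2 \le a^2\tan^2\alpha$ follows from the minimization above, giving $|b| \le a\tan\alpha$ and hence $w \in C_v^+$.

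I do not expect a genuine obstacle: the content is planar trigonometry made rigorous by the vector decomposition. The only points demanding care are verifying that $a > 0$ throughout (so that the angle condition is truly equivalent to $|b| \le a\tan\alpha$, the cosine being positive), and confirming that the nearest boundary point lies on the cone rather than on the reflected nappe, both of which follow once $r \le s\sin\alpha < s$ is in hand.
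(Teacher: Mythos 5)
Your proof is correct and follows essentially the same route as the paper: both reduce the claim to the inequality chain $r \le \gamma\sin\alpha_0 \le \gamma\sin\alpha \le |z-v|\sin\alpha$ together with the geometric fact that a point on the axis at distance $s$ from the vertex of a half-cone of aperture $\alpha$ lies at distance $s\sin\alpha$ from the cone's lateral boundary. The only difference is one of detail: the paper justifies this last fact by appealing to the symmetry/tangency built into the construction of $C_v^-$ and $C_v^+$, whereas you verify it explicitly via the orthogonal decomposition $w-v = a\,e + b$ and a one-variable minimization, which fills in rather than replaces the paper's argument.
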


\begin{proof}
The hypotheses ensure that
$$r \leq \gamma \sin {\alpha} \leq |z-v| \sin {\alpha}$$
for $\alpha=\alpha(v)$.  It follows from this and the symmetry of $C_v^-$ and $C_v^+$ that $\overline{B_r(z)} \subset C_v^+.$
\end{proof}

\begin{proposition}\label{prop23}
If $v \in \sK \setminus \mathcal{Y}_\mu$ and $\mu>\mu_0$, then ${C_v^+ \cap \sK} \subset \sK \setminus {\mathcal{Y}}_\mu$.
\end{proposition}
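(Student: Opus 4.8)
The plan is to prove the equivalent assertion $C_v^+\cap\mathcal{Y}_\mu=\emptyset$; since $\mathcal{Y}_\mu\subset\sK$, this immediately yields $C_v^+\cap\sK\subset\sK\setminus\mathcal{Y}_\mu$. First I would check that the cones $C_v^\pm$ are even defined for our $v$: because $\mu>\mu_0$ and $\mathcal{Y}_\mu$ increases with $\mu$, we have $\overline{B_{r_0}(0)}\subset\mathcal{Y}_{\mu_0}\subset\mathcal{Y}_\mu$, so $v\in\sK\setminus\mathcal{Y}_\mu\subset\sK\setminus\overline{B_{r_0}(0)}$ and in particular $|v|>r_0$. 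The two structural facts I would carry forward are that $\mathcal{Y}_\mu$ is convex and that it contains the ball $\overline{B_{r_0}(0)}$.

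I would then argue by contradiction. Suppose $w\in C_v^+\cap\mathcal{Y}_\mu$. Since $v\notin\mathcal{Y}_\mu$ we have $w\neq v$, so we may write $w=v+\xi$ with $\xi\neq 0$, and by the very definition of $C_v^+$ the reflected point $2v-w=v-\xi$ lies in $C_v^-$. Equivalently, the direction $-\xi$ makes angle at most $\alpha=\alpha(v)$ with the axis direction $-v/|v|$ of $C_v^-$, so the ray emanating from $v$ in the direction $-\xi/|\xi|$ is a ray contained in $C_v^-$. By the property recorded after the definition of the family $\mathcal{C}$ (every ray in $C_v^-$ issuing from $v$ meets $\partial B_{r_0}(0)$), this ray meets $\overline{B_{r_0}(0)}$ at a point $p=v-s_0\,\xi/|\xi|$ with $s_0>0$; here $s_0>0$ because $|v|>r_0$ forces $v\notin\overline{B_{r_0}(0)}$.

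To finish, I would observe that $v$ lies on the open segment joining $w$ and $p$: parametrizing that segment as $v+t\,\xi/|\xi|$, the endpoint $w$ corresponds to $t=|\xi|>0$ and $p$ to $t=-s_0<0$, so the value $t=0$, namely $v$, is strictly interior to $[w,p]$. Since $w\in\mathcal{Y}_\mu$ and $p\in\overline{B_{r_0}(0)}\subset\mathcal{Y}_\mu$, convexity of $\mathcal{Y}_\mu$ forces $v\in\mathcal{Y}_\mu$, contradicting $v\in\sK\setminus\mathcal{Y}_\mu$. Hence $C_v^+\cap\mathcal{Y}_\mu=\emptyset$, which proves the proposition.

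The conceptual heart of the argument is the reflection identity $w\in C_v^+\iff 2v-w\in C_v^-$, which converts membership of $w$ in the outward cone $C_v^+$ into a statement that the \emph{inward} ray from $v$ toward the origin must pierce the ball $\overline{B_{r_0}(0)}$; combined with the convexity of $\mathcal{Y}_\mu$ and the containment $\overline{B_{r_0}(0)}\subset\mathcal{Y}_\mu$, this sandwiches $v$ strictly between two points of $\mathcal{Y}_\mu$. I expect the only step needing care is checking that the angular condition defining $C_v^+$ transfers correctly to the reflected ray so that the stated tangency property of $C_v^-$ applies and guarantees $s_0>0$; once that is in place, the remainder is a one-line convexity contradiction.
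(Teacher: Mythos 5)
Your proof is correct and follows essentially the same route as the paper's: assume a point $w=v+\xi$ of $C_v^+$ lies in $\mathcal{Y}_\mu$, reflect to get the ray from $v$ through $v-\xi$ inside $C_v^-$, intersect it with $\partial B_{r_0}(0)\subset\mathcal{Y}_\mu$, and use convexity of $\mathcal{Y}_\mu$ to trap $v$ strictly between two points of $\mathcal{Y}_\mu$, a contradiction. The only difference is that you make explicit some details the paper leaves implicit (that $v\notin\overline{B_{r_0}(0)}$ so the cones are defined and $s_0>0$), which is a welcome but not substantive refinement.
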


\begin{proof}
If not, there exists $w \in {C_v^+ \cap \sK}$ such that $w \notin \sK \setminus \mathcal{Y}_\mu$ and hence $w \in \mathcal{Y}_\mu$.  Thus $w \neq v$ and $w=v+\xi$ for some $\xi \neq 0$.  The ray with initial point $v$ that passes through $v-\xi$ is in $C_v^-$ and hence contains a point $y$ in $\partial B_{r_0}(0) \cap C_v^-$.  Since $\mu >\mu_0,
\partial B_{r_0}(0) \subset {\mathcal{Y}}_\mu$. By the convexity of ${\mathcal{Y}}_\mu$, the segment $\overline{yw}$ is contained in ${\mathcal{Y}}_\mu$.  But $v \in \overline{yw} \subset {\mathcal{Y}}_\mu$, which contradicts the fact that $v \in \sK \setminus \mathcal{Y}_\mu$.
\end{proof}

Define $s_0=\text{ max} \{f_0(v): v \in \partial B_{r_0}(0)\}$.  Since $f_0$ is continuous on $\sK$ and $f_0(v) \to \infty$ as $v \to \partial \sK$ with $v$ in $\sK$, there exists a constant $\mu_1$ in $(\mu_0,1)$ such that
\begin{equation}\label{mu1def}
f_0(v) \geq 1 + s_0 \text{ for all } v \text{ in } \sK \setminus \mathcal{Y}_{\mu_1}.
\end{equation}

From now on, we fix $0, r_0, \alpha_0, m_1, m_2, 0 < \mu_0 < \mu_1 <1$, and $s_0$ as above.  We then have the following:

\begin{lemma}\label{lem24}
For any $v$ in $\sK \setminus \mathcal{Y}_{\mu_1}$ and $w$ in $C_v^+ \cap \sK$ such that $w \neq v$, we have $\nabla f_0 (w) \cdot (w-v)>0$.
\end{lemma}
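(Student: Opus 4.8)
The plan is to reduce the claim to a one-dimensional convexity argument along the line joining $v$ and $w$, exploiting two facts: that $f_0$ is convex on $\sK$ (which follows from $D^2 f \ge -MI_q$ in \eqref{Po-St}, since $D^2 f_0 = D^2 f + M I_q \ge 0$), and that the value of $f_0$ at the far point $v$ is forced to exceed its value on $\partial B_{r_0}(0)$ by the normalization \eqref{mu1def}. Write $\xi = w - v \neq 0$. Since $w \in C_v^+$, by definition of $C_v^+$ the reflected point $v - \xi$ lies in $C_v^-$, so the ray from the vertex $v$ in the direction $-\xi$ is a ray of $C_v^-$.

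The key geometric input is the property recorded in the text just before Proposition \ref{ballprop}: every ray of $C_v^-$ emanating from $v$ meets $\partial B_{r_0}(0)$ at least once. First I would note that $v \in \sK \setminus \mathcal{Y}_{\mu_1}$ together with $\overline{B_{r_0}(0)} \subset \mathcal{Y}_{\mu_0} \subset \mathcal{Y}_{\mu_1}$ (using $\mu_0 < \mu_1$ and the monotonicity $\mathcal{Y}_\mu \uparrow \sK$) gives $v \notin \overline{B_{r_0}(0)}$, so $C_v^-$ is defined and the above ray hits $\partial B_{r_0}(0)$ at a genuine point $p = v - s\xi$ with $s > 0$. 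By the definition of $s_0$ we then have $f_0(p) \le s_0$, while \eqref{mu1def} gives $f_0(v) \ge 1 + s_0$; hence $f_0(v) - f_0(p) \ge 1 > 0$.

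Now I would restrict $f_0$ to the segment from $p$ to $w$. Both endpoints lie in the open convex set $\sK$ (for $p$, because $\overline{B_{r_0}(0)} \subset \mathcal{Y}_{\mu_0} \subset \sK$; for $w$, by hypothesis), so convexity of $\sK$ places the whole segment in $\sK$, where $f_0 \in C^2$. Define $\psi(t) = f_0(v + t\xi)$ for $t \in [-s,1]$, so that $\psi$ is convex and $C^1$, with $\psi(-s) = f_0(p)$, $\psi(0) = f_0(v)$, and $\psi'(1) = \nabla f_0(w) \cdot (w - v)$. Since $\psi(0) - \psi(-s) = f_0(v) - f_0(p) \ge 1 > 0$, the mean value theorem produces $\tau \in (-s,0)$ with $\psi'(\tau) = \bigl(\psi(0) - \psi(-s)\bigr)/s \ge 1/s > 0$; because $\psi$ is convex, $\psi'$ is nondecreasing, so $\psi'(1) \ge \psi'(\tau) > 0$, which is exactly $\nabla f_0(w) \cdot (w - v) > 0$.

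I expect the only real obstacle to be the geometric bookkeeping in the middle step: correctly interpreting $w \in C_v^+$ as placing $v - \xi$ in $C_v^-$, and confirming that the resulting backward ray from $v$ indeed reaches $\partial B_{r_0}(0)$ so that $s > 0$ and $p \in \sK$. Once $p$ is pinned down, the value separation $f_0(v) \ge 1 + s_0 \ge 1 + f_0(p)$ and the monotonicity of the derivative of the convex one-variable function $\psi$ make the conclusion essentially automatic; no quantitative use of the aperture bound \eqref{alpha0def} is needed here, only the qualitative fact that $C_v^-$ captures $\partial B_{r_0}(0)$.
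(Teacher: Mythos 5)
Your proof is correct, and it differs from the paper's in a way worth noting. Both arguments rest on the same core mechanism: restrict the convex function $f_0$ to the line through the inscribed ball $\overline{B_{r_0}(0)}$, the vertex $v$, and $w$, and play a value gap of size $1$ (coming from \eqref{mu1def} and the definition of $s_0$) against the monotonicity of the derivative of a convex function of one variable. The paper, however, argues by contradiction and anchors the value gap at $w$: it first invokes Proposition \ref{prop23} to conclude $w \in \sK \setminus \mathcal{Y}_{\mu_1}$, hence $f_0(w) \geq 1+s_0$, and then derives from the assumed inequality $h'(\overline t)\le 0$ and convexity that $f_0(w) \le f_0(p(\underline t)) \le s_0$, a contradiction. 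You instead anchor the gap at the vertex $v$ itself, where \eqref{mu1def} applies directly by hypothesis, so the slope of $\psi$ over $[-s,0]$ is at least $1/s>0$ and convexity propagates this forward to $\psi'(1)=\nabla f_0(w)\cdot(w-v)$; this makes the proof direct and, more substantively, removes any need for Proposition \ref{prop23}. Since that proposition is cited nowhere else in the paper, your route would allow it to be dropped altogether, at no cost: the conclusion is identical, and Corollary \ref{decf0} follows from the lemma exactly as before. The paper's variant does yield, along the way, the extra geometric fact that all of $C_v^+\cap\sK$ lies outside $\mathcal{Y}_{\mu_1}$, but that fact is not used downstream. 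One small point of care in your write-up is handled correctly: you justify $s>0$ and $p\in\sK$ via $\overline{B_{r_0}(0)}\subset\mathcal{Y}_{\mu_0}\subset\mathcal{Y}_{\mu_1}$, which is exactly what makes the mean value step legitimate on a segment interior to $\sK$.
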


\begin{proof}
If this is false, there exists $v$ in $\sK \setminus \mathcal{Y}_{\mu_1}$ and $w \in C_v^+ \cap \sK$ such that $w \neq v$ and $\nabla f_0 (w) \cdot (w-v)\leq 0$.  By Proposition \ref{prop23}, $w \in \sK \setminus \mathcal{Y}_{\mu_1}$.  Consider a linear path given by $p(t)=v+t\frac {w-v}{|w-v|}$ for $\underline t \leq t \leq \overline t$ where $p(\underline t) \in \partial B_{r_0}(0)$ and $\overline t = |v-w|$.
Setting $h(t)=f_0(p(t))$ we have $h'(\overline t) \leq 0$.
By definition of $m_2$ (see \eqref{m-1}), we have
$$0<\overline t-\underline t = |p(\overline t)-p(\underline t)| = |w-p(\underline t)| \leq |w|+|p(\underline t)| \leq 2m_2.$$
Since $h(t)$ is convex, we then have
$$0 \geq h'(\overline t) \geq \frac{h(\overline t)-h(\underline t)}{\overline t - \underline t} \geq  \frac{h(\overline t)-h(\underline t)}{2m_2}.$$
This is impossible since $m_2>0$ and by definition of $s_0,$
$$h(\overline t)-h(\underline t)=f_0(w)-f_0(p(\underline t)) \geq (1+s_0)-s_0 =1.$$
\end{proof}

\begin{corollary}\label{decf0}
If $v \in \sK \setminus {\mathcal Y}_{\mu_1}$ and $\vec{l}=\vec{wv}$ is a ray in $C_v^+ \cap \sK$ with initial point $w$ and final point $v$, then $f_0$ decreases along $\vec{l}$.
\end{corollary}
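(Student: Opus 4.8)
The plan is to deduce this directly from Lemma~\ref{lem24} by applying that lemma not only at $w$ but at every interior point of the segment joining $w$ to $v$. First I would parametrize the ray by $p(s) = w + s(v-w)$ for $s \in [0,1]$, so that $p(0) = w$ is the initial point, $p(1) = v$ is the final point, and moving along $\vec{l}$ from $w$ toward $v$ corresponds to increasing $s$. Since $w \in C_v^+ \cap \sK$ and $v$ is the vertex of $C_v^+$, the convexity of the cone $C_v^+$ and of $\sK$ guarantees that $p(s) \in C_v^+ \cap \sK$ for every $s \in [0,1]$; moreover $p(s) \neq v$ whenever $s < 1$, since $w \neq v$.

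The key step is to apply Lemma~\ref{lem24} with the point $p(s)$ (for $0 \le s < 1$) playing the role of $w$ in that lemma. Because $v \in \sK \setminus \mathcal{Y}_{\mu_1}$ and $p(s) \in C_v^+ \cap \sK$ with $p(s) \ne v$, the hypotheses of the lemma are met, and we obtain $\nabla f_0(p(s)) \cdot (p(s) - v) > 0$. The useful observation is that $p(s) - v = (1-s)(w-v)$ is a \emph{positive} multiple of $w - v$ for $s < 1$, so this inequality simplifies to $\nabla f_0(p(s)) \cdot (w - v) > 0$ for all $s \in [0,1)$.

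Finally I would read this off as a statement about the derivative of $f_0$ along the ray. Setting $\phi(s) = f_0(p(s))$, the chain rule gives $\phi'(s) = \nabla f_0(p(s)) \cdot (v - w) = -\,\nabla f_0(p(s)) \cdot (w - v) < 0$ for every $s \in [0,1)$. Hence $\phi$ is strictly decreasing on $[0,1]$, which is exactly the assertion that $f_0$ decreases as one moves from $w$ to $v$ along $\vec{l}$.

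I expect no serious obstacle here: the corollary is essentially a reformulation of Lemma~\ref{lem24} once one notices that every intermediate point of the segment is again an admissible choice in that lemma. The only points requiring minor care are verifying that the entire segment remains in $C_v^+ \cap \sK$ (which follows from convexity of $C_v^+$ and of $\sK$) and keeping track of the sign, so that the inequality $\nabla f_0 \cdot (w-v) > 0$ is correctly translated into a decrease of $f_0$ in the direction $v-w$ in which the ray is traversed.
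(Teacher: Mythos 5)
Your proof is correct and is exactly the argument the paper intends: the corollary is stated without proof precisely because it follows from applying Lemma~\ref{lem24} at each point $p(s)\neq v$ of the segment (which lies in $C_v^+\cap\sK$ by convexity) and reading off the sign of the directional derivative, as you do. The only cosmetic remark is that the containment of the segment in $C_v^+\cap\sK$ is already part of the corollary's hypothesis, so your convexity check, while harmless, is not strictly needed.
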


We can now prove $H^2_{loc}$ estimates on minimizers using appropriate variations $u+t\phi$ and $u+tw$.

\begin{lemma}\label{h2est}
Assume that $\Omega'$ is an open connected set and $\Omega' \subset \subset \Omega$.  If $u \in  H^1_{loc}(\Omega;\sK)$ is a finite energy local minimizer of $J$ in $\Omega$, then $u \in H^2(\Omega')$ and
$$\|u\|_{H^2(\Omega')} \leq C_o,$$
where $C_o$ depends only on $J(u)$, $\Omega$, $dist(\Omega',\partial \Omega)$, $M_2$, $M$ and $\lambda$.
\end{lemma}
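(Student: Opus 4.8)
The plan is to bound $\sup_{h}\int_\Omega|\nabla^h Du|^2\zeta^2\,dx$ uniformly in the difference–quotient parameter $h$, for each cutoff $\zeta\in C^2_c(\Omega)$ supported in a small ball; this yields $D^2u\in L^2_{\mathrm{loc}}$ together with the stated estimate. The point of departure is the inequality \eqref{Est-7} already produced by the first variation $\phi_k$,
$$\frac{\lambda}{2}\int_\Omega|\nabla^h Du|^2\zeta^2\,dx\le C_0\int_\Omega|\nabla^h u|^2|Du|^2\zeta^2\,dx+C_1 ,$$
so the entire difficulty is to control the quadratic–gradient term on the right. I would do this by combining a second variational inequality with the two–dimensional Ladyzhenskaya inequality and a small–ball absorption argument, using the continuity of $u$ from Proposition \ref{cont}.

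First I would construct the second family of variations $w_l$. Using the cone family $\mathcal{C}$ and Corollary \ref{decf0}, I would build admissible perturbations $u+tw_l\in H^1(\Omega;\overline{\sK})$ concentrated where $u$ lies near $\partial\sK$, i.e.\ where $u\in\sK\setminus\mathcal{Y}_{\mu_1}$: there one may displace $u(x)$ along a ray of $C^+_{u(x)}$ toward its vertex (hence toward $0$), which by Proposition \ref{ballprop} and Proposition \ref{prop23} keeps the value in $\overline{\sK}$ for small $t$, and by Corollary \ref{decf0} decreases $f_0$. Continuity is what makes this stable: on a small ball $u$ cannot oscillate between the interior $\mathcal{Y}_{\mu_1}$ and a neighborhood of $\partial\sK$, so a single cone direction serves the whole ball. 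I would then verify that $w_l$ satisfies both \eqref{Est-0-2} and \eqref{Est-1-1} with $\phi_k$ replaced by $w_l$ — the former because the cone displacement lowers $f_0$, the latter because $u$ minimizes $J$.

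From the $w_l$–inequality I would extract a second inequality complementary to \eqref{Est-7}. Combining the two inequalities with the two–dimensional estimate $\int|\nabla^h u\,\zeta|^4\le C\bigl(\int|\nabla^h u\,\zeta|^2\bigr)\bigl(\int|D(\nabla^h u\,\zeta)|^2\bigr)$, in which $D(\nabla^h u\,\zeta)$ involves only $\nabla^h Du$ and $\nabla^h u\,D\zeta$, together with the difference–quotient bound $\int_{\mathrm{supp}\,\zeta}|\nabla^h u|^2\le\int|Du|^2$, I would reduce $\int|\nabla^h u|^2|Du|^2\zeta^2$ to a multiple of $\int|\nabla^h Du|^2\zeta^2$ whose coefficient is proportional to $\int_{\mathrm{supp}\,\zeta}|Du|^2$. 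Choosing $\mathrm{supp}\,\zeta$ small enough that this quantity falls below a threshold determined by $C_0$ and $\lambda$ — possible by absolute continuity of the integral, since $Du\in L^2$ — lets the term be absorbed into the left side of \eqref{Est-7}. On balls where $u$ stays in $\mathcal{Y}_{\mu_1}$, where $f$ is $C^2$ and uniformly bounded, Proposition \ref{cont}(c) already gives uniform $C^{2,\delta}$ bounds. Covering $\Omega'$ by finitely many balls of the two types and summing yields $\|u\|_{H^2(\Omega')}\le C_o$ with the asserted dependence on $J(u),\Omega,\mathrm{dist}(\Omega',\partial\Omega),M_2,M,\lambda$.

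The main obstacle is the construction in the second step: designing $w_l$ so that the cone displacement is simultaneously admissible up to $\partial\sK$, compatible with the difference–quotient and cutoff structure needed to reproduce \eqref{Est-0-2} and \eqref{Est-1-1}, and strong enough that the resulting second inequality dominates $\int|\nabla^h u|^2|Du|^2\zeta^2$ after the interpolation. The accompanying delicate point is the constant bookkeeping in the final absorption — in particular making the radius threshold depend only on $C_0$ and $\lambda$ — which is what upgrades mere membership in $H^2_{\mathrm{loc}}$ to the quantitative bound.
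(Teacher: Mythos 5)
Your outline reproduces the paper's skeleton (continuity from Proposition \ref{cont}, a covering argument, the difference-quotient inequality \eqref{Est-7}, a second family of cone variations $w_l$ built from Corollary \ref{decf0}, then absorption), but the step you rely on to close the estimate has a genuine gap. You propose to control the quartic term $\int|\nabla^h u|^2|Du|^2\zeta^2\,dx$ by Ladyzhenskaya interpolation applied to $\nabla^h u\,\zeta$, with smallness coming from $\int_{\text{supp}\,\zeta}|Du|^2$. That interpolation only controls $\|\nabla^h u\,\zeta\|_{L^4}$; to split the mixed term you must pair it (by Cauchy--Schwarz) with $\|Du\|^2_{L^4(\text{supp}\,\zeta)}$, and in two dimensions local $L^4$ control of $Du$ is, via the very same Ladyzhenskaya inequality, equivalent to the local $H^2$ bound you are trying to prove: the argument is circular. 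Moreover, the reduction you assert --- that $\int|\nabla^h u|^2|Du|^2\zeta^2$ is bounded by a multiple of $\int|\nabla^h Du|^2\zeta^2$ with coefficient proportional to $\int_{\text{supp}\,\zeta}|Du|^2$ --- is false as a functional inequality for continuous, bounded $H^1$ maps: taking $u$ to be a (smoothly) truncated logarithm $\alpha\log(1/|x|)$ capped at scale $\rho$, plus a steep bump concentrated at scale $s\ll\rho$ at its center, and $h\approx\sqrt{\rho}$, one checks that the ratio of the two integrals stays bounded below by a constant depending on the allowed range of $u$ (i.e.\ on $\text{diam}\,\sK$), while $\int_{\text{supp}\,\zeta}|Du|^2\to 0$ as $\rho\to 0$. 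So shrinking the support of $\zeta$ can never push the coefficient below the absorption threshold; some information beyond continuity, boundedness, and small local energy must enter.

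The paper supplies exactly that missing information through the precise form of the second variation, which your proposal leaves unspecified at the crucial point: on each small square where $u$ approaches $\partial\sK$ it takes $w_l(x)=\zeta_l^2(x)\,(v_l-u(x))\,|\nabla^h u(x)|^2$ as in \eqref{defnwl}, where $v_l$ is one \emph{fixed} cone vertex serving the whole square (not $0$, and not a vertex attached to each value $u(x)$), chosen at distance of order $1-\mu$ from the values of $u$ there, and the weight $|\nabla^h u|^2$ is built into the variation. Minimality and Corollary \ref{decf0} give \eqref{Est-2}; the ellipticity \eqref{El-St}, applied to the term in which $Dw_l$ produces $-Du\,|\nabla^h u|^2\zeta_l^2$, puts $\lambda\int|Du|^2|\nabla^h u|^2\zeta_l^2$ on the favorable side, and every remaining term carries the factor $|v_l-u(x)|\le C_2(1-\mu)$. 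This yields \eqref{ineq-2}, which bounds the quartic term by $C_3(1-\mu)\int|\nabla^h Du|^2\zeta^2+C_4$, and the absorption against \eqref{ineq-1} is then achieved by fixing $1-\mu$ small depending only on $\lambda$ and $M_2$ --- a smallness in the \emph{target} (the displacement toward $v_l$), not in the local Dirichlet energy. Continuity of $u$ enters only to guarantee, for $\eta$ small, that $u(\tilde E_l)$ lies inside the single cone $C_{v_l}^+$; no interpolation inequality is used or needed. You correctly flagged the design of $w_l$ as the main obstacle, but the mechanism you propose in its place does not close the proof.
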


\begin{proof}
Let $\eta_0 >0$ satisfy $3 \sqrt{2}\eta_0 < \frac{1}{4} \text{ dist }(\Omega',\partial \Omega)$.  Define
\begin{equation} \label {defomega''}
\Omega''=\{x \in \Omega: \text{ dist }(x,\Omega') < \frac{1}{2} \text{ dist }(\Omega',\partial \Omega\}
\end{equation}
and let $\omega(d)=C d^{\sigma}$
be a modulus of continuity for $u$ on $\Omega''$.
Given $\eta > 0$ with $\eta < \eta_0$, let $\mathcal{D}_0=\{D_m: m \in \mathbb{N}\}$ be a tiling of $\mathbb{R}^2$ by closed squares of side length $\eta$, so  that $\mathbb{R}^2 = \cup _{m=1}^{\infty}D_m$ and distinct squares in this tiling have at most one edge in common.  For each $m \in \mathbb N$ let $E_m$ be the union of $D_m$ and its eight neighbors.  Thus $E_m$ and $D_m$ have the same centers and $E_m$ has side length $3\eta$.  Set
$$\mathcal{D}_1=\{D_m \in \mathcal{D}_0:D_m \cap \Omega' \neq \emptyset\}.$$
Then $\mathcal{D}_1=\{D_{m_l}: 1 \leq l \leq L\}.$  For ease of notation, let $\tilde {D}_l$ and $\tilde {E}_l$ denote $D_{m_l}$ and $E_{m_l}$, respectively,
for $1 \leq l \leq L$.
Note that by our definition of $\eta_0$, $\tilde {E}_l \subset \Omega''$ for $1 \leq l \leq L$.

We first work through the case $q\geq 2.$ Let $\mu \in [\mu_1,1).$  Thus $\mu_1 \leq \mu \leq \frac{\mu+3}{4} < 1.$ (Later we will also require that $(1-\mu)$ is sufficiently small.)  We partition $\mathcal{D}_1=\mathcal{D}_2 \cup \mathcal{D}_3$ as follows:
\begin{align*}
\tilde{D}_l \in \mathcal{D}_2 \text{ if }&u(\tilde {E}_l) \cap (\overline \sK \setminus \mathcal {Y}_{\frac{\mu+3}{4}}) \neq \emptyset,\\
\tilde{D}_l \in \mathcal{D}_3 \text{ if }&u(\tilde {E}_l) \subset \mathcal {Y}_{\frac{\mu+3}{4}}.
\end{align*}

We first assume $\tilde{D}_l \in \mathcal{D}_2$ and
show that $u \in H^2(\tilde{D}_l)$.  Note that in this case, $u(\tilde {E}_l)$ is not a subset of $\partial \sK$ because if so, $\tilde {E}_l \subset \Lambda$ (by (\ref{Energy}) and (\ref{Po-St}) since $J(u) < \infty$) and this would imply that $|\tilde {E}_l|=0$, a contradiction.  From this and the definition of $\mathcal{D}_2$ it follows that there exists $\overline y \in \tilde {E}_l$ such that $u(\overline y) \in \sK \setminus \mathcal {Y}_{\frac{\mu +3}{4}}.$  Thus
$$u(\overline y)=\tilde {\mu} g(\overline \nu) \overline \nu
\text{ for }\overline \nu=\frac{u(\overline y)}{|u(\overline y)|} \in S^{q-1}$$ and some constant $\tilde \mu$ depending on $\overline y$ such that $\frac{\mu +3}{4} < \tilde \mu < 1.$  Set $v_l=\mu g(\overline \nu) \overline \nu$.  Recall that $\mu_0 < \mu_1 \leq \mu < \frac{\mu +3}{4}$; hence $v_l$ is on the segment between $0$ and $u(\overline y)$, and is contained in $\sK$ by convexity.  Also $v_l \in
\sK \setminus {\mathcal Y}_{\mu_1} \subset \sK \setminus \overline{B_{r_0}(0)}$.  Thus the cone $C_{v_l}^+$ is in our family of cones $\mathcal{C}$ and $u(\overline y)$ is on the axis of $C_{v_l}^+$.
Since $m_2 \geq g(\overline \nu) \geq m_1$ and $1-\mu > \tilde \mu - \mu > \frac{\mu+3}{4} -\mu = \frac{3}{4}(1-\mu)$, using the definition of $u(\overline y)$ and $v_l$,  we have
\begin{align} \label{2.5}
m_2 (1-\mu) & \geq  g(\overline \nu) (\tilde {\mu}-\mu)
  =|u(\overline y)-v_l| \\
& > m_1 (\frac{\mu+3}{4}-\mu) = \frac{3m_1}{4} (1-\mu) \equiv \gamma_1 > 0. \nonumber
\end{align}

Next assume further that $\eta$ satisfies
\begin{equation}\label {2.6}
\omega(3 \sqrt {2} \eta) < \frac{3m_1}{4} (1-\mu) \sin \alpha_0 \equiv r_1 = \gamma_1 \sin \alpha_0.
\end{equation}
Since $\overline y \in \tilde{E_l}$ and the diameter of $\tilde{E_l}$ is $3 \sqrt {2} \eta$, $u(\tilde{E_l}) \subset B_{r_1}(u(\overline y)).$  By \eqref{2.5} and \eqref{2.6},
\begin{equation*}
0 < r_1=\gamma_1 \sin \alpha_0   < |u(\overline y)-v_l| \sin \alpha_0.
\end{equation*}

From this and Proposition \ref{ballprop},
we have $\overline{B_{r_1}(u(\overline y))} \subset C_{v_l}^+$.  Thus
\begin{equation}\label{ballineq}
u(\tilde{E_l}) \subset {B_{r_1}(u(\overline y))
} \subset C_{v_l}^+.
\end{equation}

Let $\zeta_l \in C^2_c((\tilde{E_l})^o)$ such that $\zeta_l=1$ on
$\tilde{D_l}$ and
$0\le\zeta_l\le 1$.  Define $\zeta_l$ to be zero in $\Omega \setminus (\tilde{E_l})^o$.  (For other values $1 \leq j \leq L$, we shall assume that the definition of $\zeta_j$ in $\tilde{E_j}$ differs only by a rigid translation that maps
$\tilde{E_j}$ onto $\tilde{E_l}$).
For $x \in \Omega$, and $h\ne 0$ sufficiently small, define
\begin{equation}\label{defnwl}
w_l(x)=\zeta_l^2(x)(v_l-u(x))|\nabla^h u(x)|^2
\end{equation}
for $1 \leq l \leq L$ where $\nabla^h u = (\nabla_1^h u,\nabla_2^h u)$ with $\nabla_k^h u(x) \equiv h^{-1} [u(x+h e_k) - u(x)]$ for $k=1,2$.
By \eqref{defomega''} and Proposition \ref{cont}, we can choose $\ot(h)>0$ depending only on $h, J(u),$ and $dist (\Omega',\partial \Omega)$ so that
\[\tau\equiv t\zeta^2(x)|\nabla^h u(x)|^2\le 1\quad\text{for } x\in\Omega\quad \text{and } 0\le t\le\ot(h).\]
Note that if $w_l(x) \neq 0$, then $x \in \tilde{E_l} \subset \Omega''$ and by \eqref{ballineq}, $u(x) \in  \overline{\sK} \cap {B_{r_1}(u(\overline y))} \subset \overline{\sK}\cap C_{v_l}^+$. Hence
\begin{align}\label{2.7}
u(x) + tw_l(x)  & = u(x) + \tau (v_l - u(x))\\
 & = (1-\tau) u(x) + \tau v_l \in  \overline{\sK}\cap C_{v_l}^+
\nonumber
\end{align}
by convexity, and it is located on the ray from $u(x)$ to $v_l$ in $ \overline{\sK}\cap C_{v_l}^+$.   By
 Corollary \ref{decf0}, we have
\[f_0(u(x)+tw_l(x))\le f_0(u(x))\text{ for all } x\in\Omega  \text{ and } 0\le t\le \ot(h).\]
Thus
\[\int\limits_\Omega f_0(u+tw_l)dx\le\int\limits_\Omega f_0(u) dx\quad\text{for } 0\le t\le\ot(h)\]
and since $u$ is a minimizer and $f(u)=f_0(u)-\frac{M}{2} |u|^2$,
\[0\le J(u+tw_l)-J(u)\le\int\limits_\Omega((F(u+tw_l, D(u+tw_l))-F(u, Du)+\frac M2(|u|^2-|u+tw_l|^2)) dx.\]
Dividing by $t$ and letting $t \to 0$, we obtain
\begin{align}\label{Est-2}
0 & \leq \int\limits_\Omega(F_P(u,Du): Dw_l+ [F_u(u,Du)-Mu]\cdot w_l) dx\\
  & = \int\limits_{\tilde E_l}(F_P(u,Du): Dw_l+ [F_u(u,Du)-Mu]\cdot w_l) dx.
\nonumber
\end{align}

We now proceed as in the proof of \cite{G}, Ch.~II, Thm 1.2.  Let $C_i$ denote constants that are independent of $h$.  Consider  $\tilde{D}_l \in \mathcal{D}_2$ and $\zeta \equiv \zeta_l$ as above.  The first step is to  consider the test function $\phi \equiv \phi_l = \phi_{l,k}$ defined by
\begin{align*}
\phi_{l,k}(x) &=[\zeta^2(x)u(x+h e_k)+\zeta^2(x-h e_k) u(x-h e_k)-(\zeta^2(x)+\zeta^2(x-h e_k))u(x)]h^{-2}\\
&=\nabla^{-h}_k(\zeta^2\nabla_k^h u)\quad \rm{for}\,\, 1\le k\le 2,
\end{align*}
where $\nabla^h_k u(x)=h^{-1}[u(x+he_k)-u(x)]$ for $h\ne 0$ sufficiently small so that $\phi$ is compactly supported in $(\tilde{E}_l)^\circ$ for $k=1,2$.
Recall that $\tilde{E}_l \subset \Omega''$ and $u(\tilde{E}_l) \subset B_{r_1}(u(\overline y))\subset \overline{\sK}$.  Thus by \eqref{ballineq}, $u(\tilde{E}_l) \subset {\sK} \cap C_{v_l}^+$.  Since $f_0(v)$ is convex on $\sK$,  it was proved in \cite{E} that for $h$ sufficiently small and $0 < t < t(h)$ sufficiently small, we have
$$\int_{\tilde{E}_l} (f_0(u+t\phi_{l,k}) - f_0(u)) dx \leq 0.$$  As in \eqref{Est-2}, since $u$ is a minimizer of $J$ we have
\begin{align}\label{Est-3}
0 & \leq \int\limits_\Omega(F_P(u,Du): D\phi_{l,k}+ [F_u(u,Du)-Mu]\cdot \phi_{l,k}) dx\\
  & = \int\limits_{\tilde E_l}(F_P(u,Du): D\phi_{l,k}+ [F_u(u,Du)-Mu]\cdot \phi_{l,k}) dx. \nonumber
\end{align}
Here
\begin{align*}
&\int\limits_{\tilde E_l}\ (F_P(u,Du): D\phi_{l,k})dx
 = \int\limits_{\tilde E_l}(F_{p^i_{\alpha}}(u,Du)\cdot {\frac {\partial}{\partial x_{\alpha}}(\nabla^{-h}_k}[\zeta^2 \nabla^h_k u^i]) dx\\
  & = \int\limits_{\tilde E_l}[A_{ij}^{\alpha \beta}(u) \frac{\partial u^j} {\partial x_{\beta}} + B^{\alpha}_i(u)]\cdot {\frac {\partial}{\partial x_{\alpha}}(\nabla^{-h}_k}[\zeta^2 \nabla^h_k u^i]) dx \\
  & = -\int\limits_{\tilde E_l}\{\nabla^h_k [A_{ij}^{\alpha \beta}(u) \frac{\partial u^j} {\partial x_{\beta}} + B^{\alpha}_i (u)]\} \cdot  \frac {\partial}{\partial x_{\alpha}}(\zeta^2 \nabla^h_k u^i) dx \\
   & -\int\limits_{\tilde E_l} \{\nabla^h_k [A_{ij}^{\alpha \beta}(u)\} \cdot
   \frac{\partial u^j} {\partial x_{\beta}} + B^{\alpha}_i (u)] \cdot \{ \zeta^2 \nabla^h_k (\frac {\partial u_i}{\partial x_{\alpha}}) + 2\zeta (\frac {\partial \zeta}{\partial x_{\alpha}}) (\nabla^h_k u_i)\} dx
 \end{align*}

and

\begin{align*}
&\int\limits_{\tilde E_l} ([F_u(u,Du) -Mu]  \cdot \phi_{l,k})dx \\
&= \int\limits_{\tilde E_l} [D_u(A_{ij}^{\alpha \beta}(u)) \cdot
   \frac{\partial u^i} {\partial x_{\alpha}} \cdot \frac{\partial u^j}
    {\partial x_{\beta}}  + D_u( B^{\alpha}_i (u)) \cdot \frac{\partial u^i} {\partial x_{\alpha}} -Mu] \cdot [\nabla^{-h}_k(\zeta^2 \nabla^h_k u)] dx  \\
& = - \int\limits_{\tilde E_l} \nabla^h_k\{ [D_u(A_{ij}^{\alpha \beta}(u)) \cdot
   \frac{\partial u^i} {\partial x_{\alpha}} \cdot \frac{\partial u^j}
    {\partial x_{\beta}}  + D_u( B^{\alpha}_i (u)) \cdot \frac{\partial u^i} {\partial x_{\alpha}} -Mu]\}\cdot (\zeta^2 \nabla^h_k u)) dx
    \end{align*}
for $1\leq k \leq 2$.  It follows that

\begin{equation} \label{ineq-1}
\frac{\lambda}{2}\int\limits_{\tilde E_l} |\nabla^h Du|^2\zeta^2\ dx \le C_0\int\limits_{\tilde E_l}|\nabla^h u|^2 |Du|^2\zeta^2\ dx + C_1
\end{equation}
for all $\eta$ sufficiently small,
where $\lambda$ is the constant defined in our assumption (\ref{El-St}).
Next we use \eqref{Est-2} and our definition of $w_l$ to prove a second inequality that will provide an upper bound on the second derivatives of $u$ in $L^2(\tilde E_l)$.

By \eqref{Est-2} and our definition of $w_l$, we have
\begin{align} \label{wl-ineq2}
0 & \leq \int\limits_{\tilde E_l}(F_P(u,Du): Dw_l+ [F_u(u,Du)-Mu]\cdot w_l) dx\\ \nonumber
 & = \int\limits_{\tilde E_l}[A_{ij}^{\alpha \beta}(u)\cdot \frac{\partial u^j}
    {\partial x_{\beta}} + B^{\alpha}_i (u)] \cdot \frac{\partial}{\partial x_{\alpha}}(\zeta^2 (v_l^i-u^i(x))|\nabla^hu|^2) dx \\ \nonumber
 & + \int\limits_{\tilde E_l}[D_u(A_{ij}^{\alpha \beta}(u)) \cdot \frac{\partial u^i} {\partial x_{\alpha}} \cdot \frac{\partial u^j}
    {\partial x_{\beta}} + (D_u(B^{\alpha}_i (u)) \cdot \frac{\partial u^i} {\partial x_{\alpha}} - Mu] \cdot \zeta^2 (v_l-u(x)) |\nabla^h u|^2 dx. \nonumber
\end{align}

Recall that $u(\tilde E_l) \subset B_{r_1}(u(\overline y))$ and by \eqref{2.5} and \eqref{2.6},

\begin{align*}
|v_l-u(x)| & \leq |v_l-u(\overline y)| + |u(\overline y)-u(x)| \leq |v_l-u(\overline y)| + r_1\\
&  \leq m_2(1-\mu) + \frac{3m_1}{4} \sin \alpha_0 (1-\mu)  \equiv C_2 (1-\mu).
\end{align*}

for all $x$ in $\tilde E_l$.
Using this we see from (\ref{wl-ineq2}) that
\begin{equation}\label{ineq-2}
\frac{\lambda}{2}\int\limits_{\tilde E_l}|\nabla^h u|^2 |Du|^2\zeta^2\ dx\le C_3 (1-\mu) \int\limits_{\tilde E_l} |\nabla^h Du|^2\zeta^2\ dx  + C_4
\end{equation}

Note that $C_0$ and $C_3$ depend only on $\lambda$ and $M_2.$  Taking $(1-\mu)$ sufficiently small so that $C_3 (1-\mu) \leq \frac {\lambda^2}{8C_0}$, it follows from \eqref{ineq-1} and \eqref{ineq-2} that

\begin{equation}\label{ineq-3}
\int\limits_{\tilde D_l} (|\nabla^h Du|^2+|\nabla^h u|^2 |Du|^2) dx \le C_5.
\end{equation}

The constants $C_j$ are uniform in $h$ for $|h|\leq h_0$. Letting $h\to 0$ we get

\begin{equation}\label{ineq-4}
\int\limits_{\tilde D_l} (|D^2 u|^2+|Du|^4) dx \le C_5.
\end{equation}

This inequality holds for all $\tilde D_l \in {\mathcal{D}}_2$.
If $\tilde D_l \in {\mathcal{D}}_3$ then $f_u$ is bounded on a neighborhood of the union of all such squares.   It follows as in the proof from \cite{G}, Ch.~II referred to above that we have \eqref{ineq-4} (with a possibly larger value of $C_5$) in this case as well.

In conclusion, we first fix $\mu \in (\mu_1,1)$ sufficiently close to 1 so that \eqref{ineq-3} follows from \eqref{ineq-1} and \eqref{ineq-2}.  We then choose $\eta \in (0,\eta_0)$ sufficiently small so that \eqref{2.6}, \eqref{ineq-1}, and \eqref{ineq-2} hold.
This fixes the covering ${\mathcal{D}}_1$ such that \eqref{ineq-4} holds for a fixed constant $C_5$ for all $1 \leq l \leq L$.  Summing on $l$ we have $\int_{\Omega'} |D^2u|dx <  \infty$.

We now comment on the case $q=1.$  In this instance $\mathcal{K}$ is a bounded interval $(a,b)$ such that $f_0(v)$ is a convex function satisfying $\underset{v\uparrow b}\lim f_0(v)=\infty=\underset{v\downarrow a}\lim f_0(v).$ It follows that there exists $\delta>0$ so that $f_0'(v)>0$ for $b-\delta<v<b.$ In the same way we have that $f_0'(v)<0$ for $a<v<a+\delta.$  With this information we can carry out the argument as above using half lines $R_v^- (R_v^+)$ in place of the  cones $C_v^- (C_v^+)$.
 \end{proof}

\section{Proof of Theorem \ref{uthm}.}\label{s3}

Recall that $u \in C^2(\Omega_0)$ and $\Omega_0 \subset \Omega \setminus \Lambda$.  The two facts that $u\in H^2_{\text{loc}}(\Omega)$ and $u$ satisfies  equation (\ref{Eq}) on $\Omega_0$ imply that $u$ is a strong solution to the equilibrium equations (\ref{Eq}) throughout $\Omega$
and that each term appearing in this equation  is in $L^2_{\text{loc}}(\Omega)$. We use this to prove Theorem \ref{uthm}.

\begin{proof}
Suppose that $B_{2R}(x_0)\subset\subset \Omega$. Let $(\rho,\theta)$ be polar coordinates centered at $x_0$ and consider the field of pure second derivatives $D^2_{\upsilon\upsilon}u(x)$ where $\upsilon=\upsilon(x)=e_{\theta}$ for $x\in B_R(x_0)$. We have $D^2_{\upsilon\upsilon}u(x)=\frac{u_\rho}{\rho}+\frac{u_{\theta\theta}}{\rho^2}$ is in $L^2(B_R(x_0))$. Let $\zeta=\zeta(\rho)\in C^2_c(B_R(x_0))$ such that $\zeta=1$ on $B_{R/2}(x_0)$.  Fix $0<r\le\frac R2$.  Multiplying the equation (\ref{Eq}) by $\zeta^2 D^2_{\upsilon\upsilon} u$ and integrating  over $B_R(x_0)\setminus B_r(x_0)$, we obtain

\begin{equation}\label{Eq2}
\int\limits_{B_R(x_0)\setminus B_r(x_0)} (\text{div} F_P-F_u)\cdot\zeta^2 D^2_{\upsilon\upsilon} u\, dx\,=\,\int\limits_{B_R(x_0)\setminus B_r(x_0)}f_u(u)\cdot\zeta^2(\frac{u_\rho}{\rho}+\frac{u_{\theta\theta}}{\rho^2})\, dx
\end{equation}

Since $\text{div} F_P -F_u$ and the test function $D^2_{\upsilon\upsilon} u$ are in $L^2(B_R(x_0))$ we get
\begin{equation}\label{In1}
\left|\int\limits_{B_R(x_0)\setminus B_r(x_0)} (\text{div} F_P-F_u)\cdot\zeta^2 D^2_{\upsilon\upsilon} u\, dx\right| \le C_1
\end{equation}
where $C_1$  is independent of $r$ for $r$ in $(0,\frac {R}{2}]$.
By (\ref{Eq}), $f_u$ is also in $L^2(B_R(x_0))$.  Consider
\[\int\limits_{B_R(x_0)\setminus B_r(x_0)} f_u(u)\cdot\zeta^2 \frac {u_{\theta\theta}}{\rho^2}\, dx =\int_r^R\int_0^{2\pi}f_u(u)\cdot\zeta^2 \frac{u_{\theta\theta}}{\rho}\,d\theta d\rho.\]
Since $u$ is a finite energy minimizer of $J$ in $\Omega$, $f(u)$ is in $L^1(\Omega)$ and for almost every $\rho$ satisfying $r\le \rho\le R$ we have
\begin{equation}\label{In2}
\int^{2\pi}_0 (|f(u(\rho,\theta))|+|f_u(u(\rho,\theta))|^2+|u_\theta(\rho,\theta)|^2) d\theta<\infty.
\end{equation}
For such a value of $\rho$, if there is an interval $(\alpha,\beta)\subset [0, 2\pi)$ so that $u(\rho,\beta)\in \Lambda$ and $u(\rho,\theta)\notin\Lambda$ for all $\theta$ in $[\alpha,\beta)$, then
\[\int^\theta_\alpha \partial_\phi f(u(\rho,\phi)) d\phi=\int^\theta_\alpha f_u(u(\rho,\phi)\cdot u_\phi d\phi \le C_1 < \infty.\]
Thus $f(u(\rho,\theta))-f(u(\rho,\alpha))\le C_1<\infty$. However  $\lim\limits_{\theta\to\beta} f(u(\rho,\theta))=\infty$ and this is not possible. It follows that for each $\rho$ for which (\ref{In2}) holds,  we have $\partial B_\rho(x_0)\cap\Lambda=\emptyset$. Since $u\in C^2(\Omega\setminus \Lambda)$ it follows that the functions $u(\rho,\cdot)$ and $f(u(\rho,\cdot))$ are smooth.  This allows us to integrate by parts and obtain
\begin{align*}
\frac{\zeta^2(\rho)}{\rho} \int^{2\pi}_0 f_u\cdot u_{\theta\theta} d\theta&= -\frac{\zeta^2(\rho)}{\rho} \int^{2\pi}_0 u_\theta \cdot D^2f\cdot u_\theta\, d\theta\\
&\le M\int\limits_{\partial B_\rho(x_0)} |Du|^2 ds.
\end{align*}
We conclude then that
\begin{equation}\label{In3}
\int\limits_{B_R(x_0)\setminus B_r(x_0)} f_u\cdot{\frac{ u_{\theta\theta}}{\rho^2}} \zeta^2 dx \le M \int_{B_R(x_0)} |Du|^2 dx\le C_2.
\end{equation}
Next using the estimate
\[\int_{B_R(x_0)}(|f_u(u(x))|^2+|Du(x)|^2) dx <\infty\]
and the same argument as above on almost every line parallel to one of the coordinate axes it follows that $f(u(x))\in W^{1,1}(B_R(x_0))$ and $Df(u)=f_u\cdot Du$ almost everywhere  on $B_R(x_0)$. With this fact we see that
\begin{align*}
&\int_{B_R(x_0)\setminus B_r(x_0)}\zeta^2 f_u(u(x))\cdot\frac{u_{\rho}}{\rho} dx\\
&=\int^{2\pi}_0\int^R_r\zeta^2(\rho)(f(u(\rho,\theta)))_\rho d\rho d\theta\\
&=-r^{-1}\int_{\partial B_r(x_0)}f(u)ds
-\int_{B_R(x_0)\setminus B_{R/2}(x_0)} f(u)\frac{(\zeta^2)_\rho}{\rho} dx
\end{align*}
where for the last term we used the fact that $\zeta^2=1$ on $B_{R/2} (x_0)$.

\noindent
Thus
\begin{equation}\label{In4}
\int_{B_R(x_0)\setminus B_r(x_0)}\zeta^2 f_u\cdot\frac{u_\rho}{\rho} dx\le -r^{-1}\int_{\partial B_r(x_0)} f ds+ C_3.
\end{equation}
Taking \eqref{In1}, \eqref{In3}, and \eqref{In4} together with \eqref{Eq2} we see that

\[r^{-1}\int_{\partial B_r(x_0)} f(u)ds \le C_4\quad \text{for } 0<r<\frac R2\]
where $C_4$ is independent of $r$.
Since $u(x)$ is continuous on $\Omega$ and $\lim_{u \to u_0} f(u)=f(u_0) \in (-\infty,\infty]$ for each $u_0 \in \overline{\sK}$,  we have
$$2\pi f(u(x_0)) \leq C_4,$$
where $C_4$ depends on $R$, $J(u)$, and $\|u\|_{H^2(B_R(x_0)}$.  In particular $\Lambda=\emptyset$.  Furthermore, applying  Lemma \ref{h2est} we see that $dist(u(x_0), \partial \sK) \geq c > 0$
where $c$ depends on $dist(x_0,\partial \Omega)$ and $J(u)$.
\end{proof}

\section{Applications to Liquid crystals.}\label{s4}
We briefly describe the liquid crystal model that motivates our constrained problem and state our result as it applies to this case. A more detailed overview of energies for liquid crystals is given in \cite{B}. Let $\Omega\subset\bR^3$ be a region filled with rod--like liquid crystal molecules. For $x\in\Omega$ and $p\in \mathbb{S}^2$ denote by $\rho(x,p)$ the probability distribution for the long axes of the molecules near $x$ aligned with the direction $p$. We have
\begin{equation}\label{rho}
\rho(x,p)\ge 0,\quad \int\limits_{\mathbb{S}^2}\rho(x,p)dp=1.
\end{equation}
If the directions are random so that no direction is preferred then $\rho(x,p)=\rho_0=\frac 14\pi$ and the liquid crystal is in the isotropic state  at $x$. The de Gennes $Q$ tensor is introduced as a macroscopic order parameter
\begin{equation}\label{Q}
Q(x)=\int\limits_{\mathbb{S}^2}(p\otimes p-\frac 13 I)\rho(x,p) dp
\end{equation}
representing the second moments of $\rho$ normalized so that $Q=0$ if $\rho=\rho_0$.
Set $S_0=\{A\in \bM^{3\times 3}: A=A^t, tr A=0\}$. Then from (\ref{rho}) and (\ref{Q}) we see that $Q$ takes on values in the open, bounded and convex set
\begin{equation}
\sM=\{A\in S_0: -\frac 13 <\lambda_{\min}(A)\le \lambda_{\max}(A)<\frac 23\},
\end{equation}\label{43}
where $\lambda_{\min}(A)$ and $\lambda_{\max}(A)$ are the minimum and maximum eigenvalues of $A$ respectively. This is the set of physically attainable states in $S_0$. A free energy for constant nematic liquid crystal states identified with $Q\in\sM$ was developed by Katriel, Kventsel, Luckhurst and Sluckin \cite{K} and Ball and Majumdar \cite{BM}. Assuming Maier--Saupe molecular interactions it takes the form
\begin{equation}\label{44}
\begin{cases}
\psi_b(Q)=Tf_{ms}(Q)-\kappa|Q|^2\quad &\text{for } Q\in\sM,\\
\quad\quad{\ }=\infty\quad &\text{for } Q\in S_0\setminus \sM,\\
f_b(Q)={{\text{inf}}_{\rho\in A_Q}} (\int\limits_{\mathbb{S}^2}\rho\, \log \rho\, dp)
\end{cases}
\end{equation}
where
\[A_Q=\{\rho\in L^1(\mathbb{S}^2):\, \rho\ge 0, \int\limits_{\mathbb{S}^2}\rho(p)dp=1,\quad Q=\int\limits_{\mathbb{S}^2}(p\otimes p-\frac 13 I)\ \rho(p)\ dp\}\]
and $T, \kappa>0$. It is shown in \cite{BM} that $f_{ms}$ is convex on $\sM$ with $\lim\limits_{Q\to\partial \sM} f(Q)=\infty$ and it  is shown in \cite{F} that $f\in C^\infty(\sM)$. Ball and Majumdar then used $\psi_b$ to define an energy functional to characterize stable spatially varying liquid crystal configurations. They considered local minimizers $Q\in H^1(\Omega,\overline{\sM})$ to
\begin{equation}
I_{LdG}[Q]=\int\limits_\Omega(G(Q, DQ) +\psi_b(Q)) dx
\end{equation}
where following \cite{L}  the elastic energy density takes the form
\begin{equation}\label{45}
G(Q, DQ)=\sum^5_{i=1} L_i I_i(Q, DQ),
\end{equation}such that
\begin{eqnarray*}
 I_1=D_{x_k} Q_{ij} D_{x_k}Q_{ij}&\quad\quad& I_2=D_{x_j} Q_{ij} D_{x_k}Q_{ik},\\
I_3=D_{x_j} Q_{ik} D_{x_k}Q_{ij}&\quad\quad& I_4=Q_{\ell k} D_{x_\ell} Q_{ij} D_{x_k}Q_{ij},\\
I_5=\epsilon_{\ell j k}Q_{\ell i} D_{x_j} Q_{ki}.&&
\end{eqnarray*}
\noindent
These are polynomial expressions in terms of $Q$ and $DQ$ that satisfy the principle of frame indifference and material symmetry. The expressions $I_1,I_2,I_3 \text{ and }I_4$ satisfy both properties. Analytically this means that $I_1, \cdots,I_4$ are invariant under transformations over $O(3)$ while  $I_5$ satisfies only frame indifference and is invariant under transformations over $SO(3)$. The first four terms are quadratic in $DQ$ while the fifth is linear and is included in the elastic energy density when modeling chiral liquid crystals.  Here ${\epsilon_{\ell j k}}$ is the Levi--Civita tensor.

Let
$\sD:=\{D=[D_{ijk}]\quad 1\le i,j,k\le 3 : D_{ijk}=D_{jik}$ and $\sum\limits_{i=1}^3 D_{\ell\ell k}=0$ for each $i, j$ and $k\}$.
The elasticity constants are to be chosen so that $\sum\limits_{\ell=1}^4 L_k I_\ell(Q, D)\ge c_0|D|^2$ for some $c_0>0$ for all $Q\in \overline{\sM}$ and $D\in\sD$. Inequalities that ensure the coercivity condition are
\begin{equation}\label{47}
L'_1+\frac 53 L_2+\frac 16 L_3>0,\quad L'_1-\frac 12 L_3>0, \quad L'_1+L_3>0
\end{equation}
where
\[L'_1=\begin{cases}
L_1-\frac 13 L_4 &\quad\text{if $L_4\ge 0$}\\
L_1+\frac 23 L_4 &\quad\text{if $L_4\le 0$}.\end{cases}\]
(See \cite{B}).

The space $S_0$ has dimension five. If we take an orthonormal basis $\{E_1, \dots, E_5\}$ we can parameterize the space with the isometry
\[Q(v):\bR^5\to S_0,\quad Q(v)=\sum\limits^5_{j=1} v_jE_j.\]
Set $\sK=\{v\in\bR^5 : Q(v)\in \sM\}$, $f(v)=\psi_b(Q(v))$ and $F(v, Dv)=G(Q(v), DQ(v))$. Then $\sK$ is an open, bounded and convex region in $\bR^5$, $f$ satisfies  (\ref{Po-St}) and $F$  satisfies   (\ref{El-St}).
For the case $n=2$ we view the liquid crystal body on the infinite cylinder $\Omega\times\bR$ where $\Omega\subset \bR^2$ is the cylinder's cross--section and the order parameter is $Q=Q(x_1, x_2)$. We can then apply our results from Theorem \ref{uthm} to $J[v]=I_{LdG}[Q(v)]$ to obtain:

\begin{theorem}\label{thm41}
Let $\Omega\subset \bR^2$ and let $Q\in H^1(\Omega; \overline{\sM})$ be a finite energy local minimizer for $I_{LdG}[\cdot]$ satisfying (\ref {44})-(\ref{47}). Then $Q\in C^2(\Omega)$. If $E\subset\subset \Omega$ then  $Q(E)\subset\subset \sM$ and $Q$ satisfies the equilibrium equation
\[[ { \textmd{div} }\,G_D(G, DQ) - G_Q(Q, DQ)-\psi_{b,Q}(Q)]^{st}=0 \text{ in }\Omega.
\]
where $[A]^{st}$ is the symmetric and traceless part of $A\in \bM^{3\times 3}$.
\end{theorem}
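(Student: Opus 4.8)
The plan is to reduce Theorem \ref{thm41} to Theorem \ref{uthm} through the isometric parametrization $Q(v)=\sum_{j=1}^5 v_jE_j$ of $S_0$ already introduced, and then translate the abstract conclusions back into $Q$-tensor language. Thus I set $\sK=\{v\in\bR^5:Q(v)\in\sM\}$, $f(v)=\psi_b(Q(v))$, $F(v,Dv)=G(Q(v),DQ(v))$, and $J[v]=I_{LdG}[Q(v)]$, so that the given finite energy local minimizer $Q$ corresponds to a finite energy local minimizer $u$ of $J$ with $Q=Q(u)$. The substance of the argument is to check that \eqref{Po-St} and \eqref{El-St} hold for this $f$ and $F$; once this is done, the interior regularity, the inclusion $u(\Omega)\subset\sK$, and the strong physicality follow from Theorem \ref{uthm}, whose proof rests only on compactly supported variations and so applies to local minimizers.

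To verify \eqref{Po-St}, I would note that since $Q(\cdot)$ is a linear isometry, $f_{ms}(Q(v))\in C^\infty(\sK)$ by the smoothness of $f_{ms}$ on $\sM$, while $|Q(v)|^2=|v|^2$, so $f(v)=Tf_{ms}(Q(v))-\kappa|v|^2$ is $C^2$ on $\sK$. The convexity of $f_{ms}$ gives $D^2_v(Tf_{ms}(Q(v)))\ge 0$, whence $D^2f\ge-2\kappa I_5$ and \eqref{Po-St} holds with $M=2\kappa$; observe that then $f_0(v)=Tf_{ms}(Q(v))$ is exactly the convex function used throughout Section \ref{s2}. The blow-up $f(v)\to\infty$ as $v\to\partial\sK$ follows from $f_{ms}(Q)\to\infty$ as $Q\to\partial\sM$ together with the boundedness of $-\kappa|Q|^2$ on $\overline\sM$, and $f\equiv\infty$ off $\sK$ by definition.

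The key step is \eqref{El-St}. Writing $G=\sum_{i=1}^5 L_iI_i$, the terms $I_1,I_2,I_3$ are quadratic in $DQ$ with constant coefficients, $I_4$ is quadratic in $DQ$ with coefficients affine in $Q$ (hence in $v$), and $I_5$ is linear in $DQ$ with coefficients linear in $Q$. Consequently $F(v,P)=A^{\alpha\beta}_{ij}(v)p^i_\alpha p^j_\beta+B^\alpha_i(v)p^i_\alpha$ with $A(v)$ affine and $B(v)$ linear in $v$, so both lie in $C^2(\overline\sK)$. For coercivity I would use the crucial structural observation that each basis element $E_j$ lies in $S_0$: applying the isometry columnwise, an arbitrary $P\in\bM^{5\times2}$ corresponds to the tensor $D$ with $D_{x_k}Q=\sum_j p^j_kE_j\in S_0$, which therefore satisfies $D_{ijk}=D_{jik}$ and $\sum_\ell D_{\ell\ell k}=0$, i.e.\ $D\in\sD$, with $|D|=|P|$. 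Hence the coercivity of $\sum_{\ell=1}^4 L_\ell I_\ell$ on $\sD$ guaranteed by \eqref{47} transfers to $A^{\alpha\beta}_{ij}(v)p^i_\alpha p^j_\beta\ge c_0|P|^2$ uniformly in $v\in\overline\sK$, so \eqref{El-St} holds with $\lambda=c_0$. I expect this to require the most care, since it is precisely the fact that the $S_0$-valuedness of $Q$ forces its gradient into $\sD$ that makes the $\sD$-coercivity \eqref{47} sufficient for the full coercivity demanded by \eqref{El-St}.

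With \eqref{Po-St} and \eqref{El-St} in hand, Theorem \ref{uthm} gives $u\in C^{2,\delta}(\Omega)$, $u(\Omega)\subset\sK$, and $\mathrm{dist}(u(E),\partial\sK)\ge c>0$ for $E\subset\subset\Omega$; pushing forward by the linear isometry yields $Q\in C^2(\Omega)$ and $Q(E)\subset\subset\sM$. It remains to recast the Euler--Lagrange equation \eqref{Eq}. Taking the inner product of $F_P$, $F_u$, $f_u$ with the basis $\{E_j\}$ and using the chain rule, the $j$-th component of \eqref{Eq} is the $S_0$-inner product of $\mathrm{div}\,G_D(Q,DQ)-G_Q(Q,DQ)-\psi_{b,Q}(Q)$ with $E_j$. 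Since $\{E_j\}$ is an orthonormal basis of $S_0$, assembling these five scalar identities is equivalent to the vanishing of the orthogonal projection of that expression onto $S_0$, which is exactly its symmetric, traceless part; this gives $[\mathrm{div}\,G_D(Q,DQ)-G_Q(Q,DQ)-\psi_{b,Q}(Q)]^{st}=0$ in $\Omega$ and completes the proof.
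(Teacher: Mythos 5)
Your proposal is correct and follows essentially the same route as the paper: the paper likewise reduces Theorem \ref{thm41} to Theorem \ref{uthm} via the isometry $Q(v)=\sum_{j=1}^5 v_jE_j$, setting $\sK$, $f(v)=\psi_b(Q(v))$, $F(v,Dv)=G(Q(v),DQ(v))$, and asserting that \eqref{Po-St} holds (with the convexity of $f_{ms}$ absorbing $-\kappa|Q|^2$ into $M$) and that \eqref{El-St} holds via the $\sD$-coercivity ensured by \eqref{47}. Your write-up in fact supplies details the paper leaves implicit---notably the identification of $\bM^{5\times 2}$ gradients with tensors in $\sD$ of equal norm, and the observation that vanishing of all five components of \eqref{Eq} is equivalent to vanishing of the orthogonal projection onto $S_0$, i.e.\ of the symmetric traceless part.
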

\bigskip
\noindent{\bf Acknowledgement}. {This work was  partially supported by NSF grant DMS-1412840.}

\end{document}